\theoremstyle{plain}
\newtheorem{thm}{Theorem}[section]
\newtheorem{lem}[thm]{Lemma}
\newtheorem{prop}[thm]{Proposition}
\newtheorem{fact}[thm]{Fact}
\theoremstyle{definition}
\newtheorem{cla}[thm]{Claim}
\numberwithin{equation}{section}
\def\R1{\widetilde{R}}
\def\T1{\widetilde{T}}
\def\supp{\operatorname{supp}}
\def\eps{\varepsilon}
\def\kap{\varkappa}
\def\R{\mathbb{R}}
\def\diam{\operatorname{diam}}
\def\wh{\widehat}
\def\wt{\widetilde}
\def\S{\mathbb{S}}
\def\kap{\varkappa}
\def\T{\mathbb{T}}
\def\H{\mathcal{H}}
\def\PW{\mathcal{P}\mathcal{W}}
\def\B{\mathcal{P}\mathcal{W}_{\infty}}
\def\N{\mathbb{N}}
\def\C{\mathbb{C}}
\def\W{\mathbf{W}}
\def\D{\mathbf{D}}
\def\Beu{\textbf{D}_{\text{Beu}}}
\def\card{\operatorname{card}}
\def\XXint#1#2#3{{\setbox0=\hbox{$#1{#2#3}{\int}$}
     \vcenter{\hbox{$#2#3$}}\kern-.5\wd0}}
\begin{document}

\title[Mobile Sampling]{A sufficient condition for Mobile Sampling in terms of surface density}
\author{Benjamin Jaye}
\email{bjaye3@gatech.edu}
\address{School of Mathematics, Georgia Tech}
\author{Mishko Mitkovski}
\email{mmitkov@clemson.edu}
\address{School of Mathematical and Statistical Sciences, Clemson University}

\thanks{Research supported in part by NSF DMS-2049477 and DMS-2103534 to B.J and DMS-2000236 to M.M.  This work was completed while the first author was in residence at the Hausdorff Institute in Bonn as part of the trimester The Interplay between High-Dimensional Geometry and Probability.}
\keywords{Mobile sampling, stable sampling, path density}

\maketitle

\begin{abstract}
We provide a sufficient condition for sets of mobile sampling in terms of the surface density of the set.
\end{abstract}

\section{Introduction}

One of the most fundamental problems in signal and data processing is the problem of stable recovery of a band-limited function from a set of incomplete measurements. It is well-known that a stable recovery is possible if and only if measurements are available on a set $\Gamma\subseteq \R^d$ satisfying the so called sampling inequality: There exists $c>0$ such that 
\[c\Bigl(\int_{\R^d}|f|^2 dm_d\Bigl)^{1/2} \!\!\leq \Bigl(\sum_{\gamma\in\Gamma}|f(\gamma)|^2 \,\Bigl)^{1/2}
\!\!\text{ for every }f\in \PW_2(K).\]
Here and elsewhere, we denote by $\PW_2(K)$ the classical Paley-Wiener space consisting of square-integrable functions with Fourier spectrum\footnote{We normalize the Fourier transform so that for $f\in \mathcal{S}(\R^d)$, $\wh{f}(\xi) = \int_{\R^d}f(x)e^{-2\pi i x\cdot \xi}dm_d(x)$.} contained in a compact set $K\subseteq \R^d$, and by $m_d$ the $d$-dimensional Lebesgue measure.
Sets $\Gamma$  satisfying the sampling inequality are typically called (stable) \textit{sampling sets}.  When $d=1$ and $K$ is an interval,  
Beurling \cite{Beu} and Kahane \cite{Kah}  essentially characterized sampling sets in terms of what is now known as the lower Beurling density 
$$\Beu^-(\Gamma) = \liminf_{r\to \infty}\inf_{x\in \R^d}\frac{\text{card}(\Gamma\cap B(x,r))}{m_d(B(x,r))}.$$ 
Namely, for a uniformly discrete set $\Gamma$ to be sampling for $\PW_2(-\frac{1}{2},\frac{1}{2})$ it is necessary $\Beu^-(\Gamma)\geq 1$ and it is sufficient $\Beu^-(\Gamma)> 1$.  Landau \cite{Lan} extended the necessary condition $\Beu^-(\Gamma)\geq 1$ to all dimensions $d$ and all compact spectra $K$ with Lebesgue measure $1$. Simple counter-examples show that a general sufficiency result of this type in higher dimensions is not possible.

A more general form of the sampling problem asks for a description of so called \textit{sampling measures} $\mu$ which satisfy the following appropriate analog of the sampling inequality:
There exists $c>0$ such that 
\[c\Bigl(\int_{\R^d}|f|^2 dm_d\Bigl)^{1/2} \!\!\leq \Bigl(\int_{\R^d} |f|^2 d\mu \Bigl)^{1/2}
\!\!\text{ for every }f\in \PW_2(K).\]
Interesting classes of sampling measures take the form $\mu=1_\Gamma \H^k$, where $\H^k$ is the $k$-dimensional Hausdorff measure\footnote{We normalize the Hausdorff measure so that $\H^{k}$ coincides with $m_{k}$ when restricted to a $k$-dimensional plane in $\R^d$.}. The aforementioned results of Beurling and Kahane concern the case $k=0$. At the other extreme $k=d$, the Logvinenko-Sereda \cite{LS, MS} inequality (see \cite{Pan1,Pan2, Katz} for earlier results regarding $d=1$) provides a complete description of sampling measures of the form $\mu=1_\Gamma \H^d$ when the spectrum $K$ is a $d$-dimensional ball. Surprisingly enough, until recently, very few results were available concerning the natural intermediate case $0<k<d$. 

\subsection{The mobile sampling problem} The mobile sampling problem concerns the intermediate case $\mu=1_\Gamma \H^k$ with $0<k<d$. The study of this problem in this level of generality was initiated by Unnikrishnan and Vetterli~\cite{UV12, UV13}, who formulated it precisely and coined the name mobile sampling. It should be noted that some traces of this problem already appear in the earlier work of Benedetto and Wu~\cite{BW}, who studied sampling on spiral curves in relation to MRI reconstruction. Unnikrishnan and Vetterli  characterized mobile sampling sets within a variety of special types of curves and surfaces using the concept of a \textit{path density} as an appropriate analog of the lower Beurling density. They defined a lower path density $l_k^-(\Gamma)$ of a $k$-dimensional surface $\Gamma\subseteq \R^d$ by 
\[l_k^-(\Gamma)=\liminf_{r\to \infty}\inf_{x\in \R^d}\frac{\H^{k}(\Gamma\cap B(x,r))}{m_d(B(x,r))}.\]  The mobile sampling problem has attracted a great deal of attention among mathematicians, and a variety of necessary and sufficient conditions for mobile sampling have been proved subsequently for other particular classes of surfaces where the precise shape of the support of the Fourier transform can be taken into account \cite{AGR, GRUV, NJR, RUZ}. While the vast majority of these results pertain to specific classes of surfaces, a notable general result was proved by Gr\"ochenig, Romero, Unnikrishnan, and Vetterli \cite{GRUV}, who showed that, for a given spectrum $K\subset \R^d$, the problem of finding a mobile sampling set of minimal path density is ill-posed:  the most that one can say in general about a $(d-1)$-dimensional mobile sampling surface in $\R^d$ is that its lower path density is  positive. 

As far as we are aware, all the available sufficiency results for mobile sampling concern surfaces very regularly distributed in space, with precise results concerning special classes of sets with one-dimensional features, such as parallel lines, concentric circles, and spiral sets \cite{UV12, UV13, NJR,RUZ}, and the more general results of Strichartz \cite{Str}  and Jaming-Malinnikova \cite{JM} concerning sampling sets for Sobolev and Besov space functions.

In contrast with these results, our goal in this paper is to provide a general sufficiency condition for mobile sampling \emph{in terms of the lower path density of $\Gamma$ alone}, that is valid for a very large class of surfaces (and even fractal sets), in the spirit of classical one-dimensional sampling results of Beurling and Kahane. A result of this type is only possible in the case of the path density with $k=d-1$. 

We prove that there exists an explicit constant $C_d$ (depending only on the dimension $d$) such that for every closed set $\Gamma\subseteq \R^d$ satisfying a rather mild regularity condition and a Beurling-type density condition $l^-_{d-1}(\Gamma)>C_d \W(K)$ must be a mobile sampling set for $\PW_2(K)$. Here, $\W(K)$ denotes the mean width of the spectrum $K$ (which we assume to be an origin-symmetric convex set). We give the precise statement of our result (Theorem~\ref{mobiledensitythm}) in the following section.  

The aim in this paper is therefore not to find mobile sampling sets of small density, but rather to show that the path density is an appropriate metric insofar as it can provide a guarantee of whether a general surface is mobile sampling.   The result proved here could be useful in circumstances where building a path (or surface) of mobile sensors is more costly in certain spacial locations than others, in which case building a higher concentration of sensors in certain areas of space may be more beneficial than along regularly distributed curves.

\section{Main result}  


For a non-negative integer $k$, put $\omega_{k} = \frac{\pi^{k/2}}{\Gamma(k/2+1)}$ to be the volume of the $k$-dimensional ball in $\R^{k}$.  For $E\subset \R^d$ define $$\H^k(E) = \lim_{\delta\to 0^+}\inf\Bigl\{\omega_k\sum_{j}r_j^k: E\subset \bigcup_j B(x_j, r_j)\text{ and }r_j\leq \delta\Bigl\}.$$
 When restricted to a $k$-dimensional plane, $\H^k=m_k$, where $m_k$ is the $k$-dimensional Lebesgue measure.

Suppose $K\subset \R^d$ is an origin symmetric compact convex set with $d\geq 2$.  Given $1\leq p\leq \infty$, set $\PW_p(K)$ to be the Paley-Wiener space of functions $f\in L^p(\R^d)$ whose Fourier transform as a tempered distribution is supported in $K$.  As mentioned in the introduction, we are interested in the sets of mobile sampling for $\PW_p(K)$, i.e., sets $\Gamma$ for which there is a constant $c>0$ such that
\begin{equation}\label{mobile}c\Bigl(\int_{\R^d}|f|^p dm_d\Bigl)^{1/p} \!\!\leq \Bigl(\int_{\Gamma}|f|^p \, d\H^{d-1}\Bigl)^{1/p}
\!\!\text{ for every }f\in \PW_p(K).\end{equation}
In the case $p=+\infty$, (\ref{mobile}) reads
$$\sup_{\R^d}|f|\leq C\sup_{\Gamma}|f|\text{ for all }f\in \PW_{\infty}(K).
$$
The space $\PW_{\infty}(K)$, consisting of bounded functions whose distributional Fourier transform is supported in $K$, is often referred to as the Bernstein space  \cite{OU}.

\subsection{The surface density} Define the (lower) surface density of a set $\Gamma\subseteq \R^d$ by
\[\D^-(\Gamma)= \liminf_{r\to \infty}\inf_{x\in \R^d}\frac{\H^{d-1}(\Gamma\cap B(x,r))}{m_d(B(x,r))}.\]
Note that our density $\D^-(\Gamma)$ coincides with the path density $l^-_k(\Gamma)$ of Unnikrishnan and Vetterli when $k=d-1$. Since we don't restrict to the case $d=2$, we prefer the terminology of surface density.

\subsection{Regular sets (and measures)} We now introduce the regularity assumption on the set $\Gamma$ that will be assumed in our density result.

 Let $\varphi:[0,1)\to [0,\infty)$ be a function continuous at $0$. A (locally finite Borel) measure $\mu$ is called $\varphi$-regular if for every $x\in \R^d$ and $r\in (0,1)$, $$\mu(B(x,r))\leq \varphi(r) \omega_{d-1}r^{d-1}.$$

A closed set $E\subset \R^d$ is called $\varphi$-regular if the measure $\H^{d-1}|_E$ is $\varphi$-regular. When $d=1$, a set $\Gamma\subset \R$ is uniformly discrete if and only if $\Gamma$ if $\varphi$-regular for a function $\varphi$ with $\varphi(0)=1$.

\subsection{The mean width}  Finally we introduce the quantity through which the spectrum $K$ enters into our density result. The \emph{mean width} of an origin-symmetric convex set $K$ is defined by
$$\W(K) = \frac{2}{\H^{d-1}(\S^{d-1})}\int_{\S^{d-1}}h_K(\theta)\, d\H^{d-1}(\theta),
$$
where $h_K(\theta) = \max_{x\in K} [x\cdot \theta]$ is the support function.  

Geometrically, $2h_K(\theta)$ is the diameter of the orthogonal projection of $K$  onto the line through the origin with direction $\theta$, or alternatively the distance between the two closest supporting hyperplanes to $K$ that are normal to $\theta$. For example,  if $K\subseteq \R^d$ is an origin-centered ball with radius $R$ then $\W(K)=2R$, while if $K$ is an origin-centered cube of side length $R$ then $\W(K)=\frac{2R\omega_{d-1}}{\omega_d}$ (see Section~\ref{example}).

\begin{thm}\label{mobiledensitythm} Set
$$A_d = \frac{\omega_d}{\omega_{d-1}}\cdot\frac{3d^2}{(2d+4)}.
$$
If $\Gamma$ is $\varphi$-regular, and $$\D^-(\Gamma)> \varphi(0)\cdot A_d \cdot \W(K),$$ then $\Gamma$ is a set of mobile sampling for $\PW_p(K)$ for every $1\leq p\leq\infty$, i.e., for every $1\leq p\leq \infty$ there exists a constant $c>0$ such that  (\ref{mobile}) holds.
\end{thm}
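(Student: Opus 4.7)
The plan reduces the $d$-dimensional mobile sampling inequality to a family of one-dimensional sampling inequalities along affine lines, aggregated by a Cauchy-Crofton integral-geometric identity. The mean width $\W(K)$ will emerge naturally as the average over directions of the Beurling-Kahane critical density along a line, and the surface density $\D^-(\Gamma)$ as the average line-intersection count with $\Gamma$.

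Fix $f\in\PW_p(K)$ and a direction $\theta\in\S^{d-1}$. For any $y\in\theta^\perp$, the slice $t\mapsto f(y+t\theta)$ is a one-dimensional entire function of exponential type with spectrum contained in $[-h_K(\theta),h_K(\theta)]$. The classical Beurling-Kahane/Plancherel-P\'olya theorem says that sampling on a 1D set of lower density strictly greater than $2h_K(\theta)$ provides an $L^p$-sampling inequality along that line. Integrating such a slicewise inequality in $y\in\theta^\perp$ recovers $\|f\|_{L^p(\R^d)}^p$ on the left by Fubini; on the right, the area/co-area formula converts summed point values on $\Gamma\cap(y+\R\theta)$ into $\int_\Gamma |f|^p\,|n_\Gamma(\gamma)\cdot\theta|\,d\H^{d-1}(\gamma)$, with $n_\Gamma$ the (approximate) unit normal to $\Gamma$. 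Averaging then in $\theta\in\S^{d-1}$ produces two simultaneous cancellations: the critical 1D density $2h_K(\theta)$ averages over the sphere to $\W(K)$ by the very definition of mean width, while the weight $|n_\Gamma(\gamma)\cdot\theta|$ averages to the universal constant $2\omega_{d-1}/(d\omega_d)$. Matching coefficients yields an inequality $\|f\|_{L^p(\R^d)}^p\leq C\|f\|_{L^p(\Gamma,\H^{d-1})}^p$ valid under the threshold $\D^-(\Gamma)>\varphi(0)A_d\W(K)$, with the explicit constant $A_d=(\omega_d/\omega_{d-1})\cdot 3d^2/(2d+4)$ arising from the inverse sphere average $d\omega_d/(2\omega_{d-1})$ combined with a Vitali-covering loss of order $3d/(d+2)$ needed to convert the global density hypothesis into a usable estimate on balls of moderate radius.

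The principal obstacle is that the hypothesis $\D^-(\Gamma)>\varphi(0)A_d\W(K)$ controls only the \emph{surface measure} of $\Gamma$ on balls; it places no pointwise constraint on the 1D density of $\Gamma\cap\ell$ on any particular line $\ell$, and most lines may even miss $\Gamma$ entirely. The resolution is therefore not to invoke the 1D Beurling-Kahane theorem on individual lines, but to prove the averaged inequality directly, via a Vitali-type covering argument that produces, on balls of appropriate scale, a quantitative ``most lines are sampling'' statement compatible with the surface-density average. The $\varphi$-regularity plays a dual role here: it prevents $\H^{d-1}|_\Gamma$ from concentrating on small scales (so that no purely unrectifiable or spiky behavior can absorb the density lower bound in the Crofton step), and it quantifies, through the factor $\varphi(0)$ in the threshold, the precise upper bound on line-intersection counts against which the surface-measure lower bound must be balanced. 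Carrying this covering / regularity exchange through while preserving the sharp constant $A_d$ is the delicate technical step, and is where I would expect the bulk of the proof's work to lie.
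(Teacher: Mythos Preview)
Your integral-geometric instinct (slice along lines, use Crofton, see $\W(K)$ emerge from averaging $h_K(\theta)$) is on the right track, and indeed the paper does exactly this in its core Lemma. But the mechanism you propose has a genuine gap, and the paper's actual argument is structurally different from what you outline.

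The problem with your approach is that the one-dimensional Beurling--Kahane sampling inequality is not something one can average over lines. If a positive fraction of lines meet $\Gamma$ with sub-critical density (which the hypothesis on $\D^-(\Gamma)$ in no way rules out), the slice $t\mapsto f(y+t\theta)$ along such a line is completely uncontrolled by its values on $\Gamma\cap\ell$, and nothing in a ``most lines are sampling'' statement or a Vitali covering repairs this: the sampling constant on a barely-supercritical line blows up, so no averaged inequality with a finite constant can be assembled. Your proposal acknowledges the obstacle but the suggested resolution is not an actual mechanism.

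What the paper does instead is apply the Crofton machinery not to $\Gamma$ but to the \emph{nodal set} $\{f=0\}$ of the function. Along each line, Jensen's formula bounds the number of zeros of $f_{y,\theta}$ in $[-r,r]$ by $4h_K(\theta)r+\log(1/|f(y)|)$; this requires no density hypothesis at all, only $\|f\|_\infty\le 1$. Integrating this via Crofton and a radial average $\int_0^R\cdots\,dr/r$ yields (after showing the logarithmic term is asymptotically negligible via a Remez-type estimate) a Ronkin-type bound
\[
\limsup_{R\to\infty}\frac{1}{\omega_d R^d}\int_0^R\H^{d-1}(\{f=0\}\cap B(0,r))\,\frac{dr}{r}\le \frac{A_d}{d}\W(K)
\]
whenever $|f(0)|>1/2$. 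The factor $3d/(d+2)$ you attribute to a Vitali loss in fact comes from the Gamma-function evaluation of $\int_{B^{d-1}(0,1)}(1-|y|^2)^{3/2}\,dm_{d-1}(y)$, which arises when the Jensen bound is integrated against $dr/r$.

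The second key idea, which your proposal misses entirely, is a compactness argument that upgrades this nodal-set bound to a ``fuzzy'' version: for every $\delta>0$ there is $\eps>0$ such that the same bound (up to $\delta$) holds with $\{f=0\}$ replaced by $\Gamma\cap\{|f|\le\eps\}$, uniformly over $\varphi$-regular $\Gamma$. Here $\varphi$-regularity is used not to control line-intersection counts, but to ensure that weak limits of the measures $\H^{d-1}|_{\Gamma_n\cap\{|f_n|\le\eps_n\}}$ are again $\varphi$-regular and hence dominated by $\varphi(0)\H^{d-1}$; the limit measure is then supported in $\{f=0\}$, and the Ronkin estimate applies. Comparing this upper bound on $\Gamma\cap\{|f|\le\eps\}$ against the density lower bound on $\Gamma$ forces $\{|f|>\eps\}\cap\Gamma\neq\emptyset$, which is the $p=\infty$ case; a standard localization then handles $1\le p<\infty$.
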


We make several remarks about this result:

\begin{enumerate}
\item Properties of the Gamma function ensure that $A_d = O(\sqrt{d})$.  In Section \ref{example} we provide an example to show that one must have $A_d\geq  \frac{d\omega_d}{2\omega_{d-1}}$ and so, if $d=2$, then the constant $A_d$ is within a factor of $3/2$ of being optimal, and has the correct asymptotic dependence on the dimension $d$.
\item Specializing to the case when $d=1$ and $\varphi$ satisfying $\varphi(0)=1$, the theorem states that if $\Gamma$ is uniformly discrete, and $D(\Gamma)>\text{diam}(K)/2$, then (\ref{mobile}) holds.  This is the aforementioned theorem of Beurling and Kahane.
\item  Preiss \cite{Pr} proved that a set $\Gamma$ is (countably) rectifiable if (and only if) the density $\lim_{r\to 0}\frac{\H^{d-1}(B(x,r)\cap \Gamma)}{\omega_{d-1}r^{d-1}}=1$ for $\H^{d-1}$-almost every $x\in \Gamma$.   Consequently, in view of the previous remark, the condition that a set $\Gamma$ is $\varphi$-regular with $\varphi(0)=1$ can be considered a quantitative strengthening of rectifiability.  On the other hand, it is not difficult to construct fractal sets that are $\varphi$-regular if $\varphi(0)>1$. 
\item Although the constant $A_d$ appearing in Theorem \ref{mobiledensitythm} may not be completely sharp for $d\geq 2$, the theorem nevertheless demonstrates that one need not require the surface $\Gamma$ to intersect every ball of a certain fixed radius depending on $K$ in order for mobile sampling to hold (in contrast with the sampling results in \cite{Str, JM} concerning more general classes of functions).
\end{enumerate}

As a final remark, observe that if $\Gamma\subset\R^d$ is $\varphi$-regular, and $\H^{d-1}(\Gamma)>0$ (which is a necessary condition for $\D^-(\Gamma)>0$), then necessarily $\varphi(0)\geq 1$ (see Lemma \ref{density1} below). Therefore, our theorem is a vacuous statement if $\varphi(0)<1$. 
  
\begin{lem}\label{density1}  Suppose that $\Gamma$ is $\varphi$-regular and $\H^{d-1}(\Gamma)>0$, then $\varphi(0)\geq 1$.
\end{lem}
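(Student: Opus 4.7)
The plan is to prove the contrapositive by a covering argument that compares $\H^{d-1}|_\Gamma$ to $\H^{d-1}$. Suppose for contradiction that $\varphi(0)<1$. By continuity of $\varphi$ at the origin I can pick $C\in(\varphi(0),1)$ and $r_0\in(0,1)$ with $\varphi(r)\leq C$ for every $r\in[0,r_0)$, so the $\varphi$-regularity hypothesis sharpens to
\[
\H^{d-1}(\Gamma\cap B(x,r))\leq C\,\omega_{d-1}r^{d-1}\quad\text{for every } x\in\R^d,\ r\in(0,r_0).
\]

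Next I would upgrade this pointwise bound to a measure-theoretic comparison on arbitrary Borel sets. Given a Borel set $E\subset\R^d$ and $\delta\in(0,r_0)$, any cover $E\subset\bigcup_j B(x_j,r_j)$ with $r_j\leq\delta$ satisfies, by countable subadditivity,
\[
\H^{d-1}(\Gamma\cap E)\leq\sum_j\H^{d-1}(\Gamma\cap B(x_j,r_j))\leq C\,\omega_{d-1}\sum_j r_j^{d-1}.
\]
Taking the infimum over admissible covers and sending $\delta\to 0^+$, the very definition of $\H^{d-1}$ given in the excerpt yields
\[
\H^{d-1}(\Gamma\cap E)\leq C\,\H^{d-1}(E)\quad\text{for every Borel } E\subset\R^d.
\]

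To close the argument, I would specialize this to $E=\Gamma\cap B(0,R)$; both sides then collapse to $\H^{d-1}(\Gamma\cap B(0,R))$, and since $C<1$ this forces
\[
\H^{d-1}(\Gamma\cap B(0,R))\in\{0,+\infty\}\quad\text{for every }R>0.
\]
The second alternative is ruled out by covering $B(0,R)$ by a finite collection of balls of radius $r_0/2<1$ and applying $\varphi$-regularity to each, which gives $\H^{d-1}(\Gamma\cap B(0,R))<\infty$. Hence $\H^{d-1}(\Gamma\cap B(0,R))=0$ for every $R$, and letting $R\to\infty$ contradicts $\H^{d-1}(\Gamma)>0$. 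The only mild subtlety here is that the $\varphi$-regularity hypothesis only controls balls of radius less than $1$; passing from that local bound to finiteness of $\H^{d-1}(\Gamma\cap B(0,R))$ requires the finite covering step above, but no genuine obstacle appears to arise in the argument.
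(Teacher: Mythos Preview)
Your proof is correct and follows essentially the same covering argument as the paper. The only cosmetic difference is that you first establish the general comparison $\H^{d-1}(\Gamma\cap E)\leq C\,\H^{d-1}(E)$ and then specialize to $E=\Gamma\cap B(0,R)$, whereas the paper works directly with a near-optimal cover of $\Gamma\cap B(0,R)$ to reach the same contradiction in one step.
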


\begin{proof}  Suppose that $\varphi(0)<1$.  Then there exists $\eps>0$ such that $\varphi(r)\leq 1-\eps$ for every $x\in \R^d$ and $r\in (0,\eps)$.

Fix $R>0$ such that $\H^{d-1}(\Gamma\cap B(0,R))\in (0,\infty)$ (the fact that $\H^{d-1}(\Gamma\cap B(0,R))$  is finite for any $R\in (0,\infty)$ is a consequence of the $\varphi$-regularity).  Choose balls $B(x_j, r_j)$ that cover $\Gamma\cap B(0,R)$ with $r_j\in (0,\eps)$ and
$$\sum_{j=1}^{\infty}\omega_{d-1}r_j^{d-1}<(1+\eps)\H^{d-1}(E\cap B(0,R)).
$$
But now, insofar as $\Gamma$ is $\varphi$-regular and $r_j<\eps$,
\begin{equation}\begin{split}\nonumber\H^{d-1}(\Gamma\cap B(0,R))&\leq \sum_{j=1}^{\infty}\H^{d-1}(\Gamma\cap B(x_j, r_j))\leq (1-\eps)\sum_{j=1}^{\infty}\omega_{d-1}r_j^{d-1}\\
&\leq (1-\eps)(1+\eps)\H^{d-1}(\Gamma\cap B(0, R)),
\end{split}\end{equation}
which is absurd.
\end{proof}


There are two main components of the proof of Theorem \ref{mobiledensitythm}. The first component is Proposition \ref{DFprop}, a Ronkin type estimate for the averaged surface area of the nodal set of a function in $\B(K)$.  The second component is a compactness argument leading to a fuzzy variant of the Ronkin estimate (Proposition \ref{compactprop}).  The natural issue that arises in the compactness argument is the lack of good upper semi-continuity properties for the $\H^{d-1}$ measure under (a local variant of) Hausdorff convergence of sets.  We circumvent this issue by employing a relaxation of the problem to measures, and using the $\varphi$-regularity property.  The compactness argument bares some similarities to those quite commonly used in the geometric measure theory, see, e.g. \cite{JTV}.

\section{Proof of Theorem \ref{mobiledensitythm}} 


Fix once and for all a function $\varphi:[0,1)\to (0,\infty)$ that is continuous at $0$.

\subsection{The main propositions} The main estimates concern the space $\PW_{\infty}(K)$.

Our primary function theoretic tool is the following proposition, which is proved in a similar manner to estimates by Ronkin \cite{Ro}.

\begin{prop}\label{DFprop} If $f\in \B(K)$ satisfies $\|f\|_{\infty}\leq 1$ and $|f(0)|>1/2$, then
$$\limsup_{R\to \infty}\frac{1}{\omega_d R^d}\int_0^R\H^{d-1}(B(0,r)\cap \{f=0\})\frac{dr}{r}\leq \frac{A_d}{d}\cdot\W(K),
$$
where as above
$$A_d =\frac{3d^2}{4+2d}\frac{\omega_{d}}{\omega_{d-1}}.
$$
\end{prop}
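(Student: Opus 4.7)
The plan is to obtain this Ronkin-type estimate by applying Jensen's formula to the restriction of $f$ to complex lines through the origin, integrating over direction using the mean-width identity for $\W(K)$, and then converting the resulting bound to an integral over $\{f=0\}$ via the area formula.

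By Paley--Wiener, $f$ extends to an entire function on $\C^d$ satisfying $|f(w)|\le \|f\|_\infty\, e^{2\pi h_K(\operatorname{Im} w)}$. For each $\theta\in \S^{d-1}$, the slice $F_\theta(z):=f(z\theta)$ is entire on $\C$, bounded by $1$ on $\R$, and of exponential type $2\pi h_K(\theta)$, with $|F_\theta(0)|=|f(0)|>1/2$. Applying Jensen's formula on the disk $|z|\le R$ and using $\frac{1}{2\pi}\int_0^{2\pi}|\sin\phi|\,d\phi = 2/\pi$ yields
\[\sum_{F_\theta(z_j)=0,\,|z_j|\le R}\log\frac{R}{|z_j|}\;\le\;4R\,h_K(\theta)+\log 2.\]
Restricting to real zeros $z_j\in\R$, which correspond to points of $\{f=0\}\cap\R\theta\cap B(0,R)$, and integrating over $\theta$ against the mean-width identity $\int_{\S^{d-1}}h_K(\theta)\,d\H^{d-1}(\theta)=\tfrac{d\omega_d}{2}\W(K)$ gives
\[\int_{\S^{d-1}}\sum_{x\in \{f=0\}\cap \R\theta\cap B(0,R)}\log\frac{R}{|x|}\, d\H^{d-1}(\theta)\;\le\;2Rd\omega_d\W(K)+d\omega_d\log 2.\]
The left-hand side, by the area formula applied to the radial projection $\pi:x\mapsto x/|x|$ (whose tangential Jacobian on a smooth point of $\{f=0\}$ with unit normal $n(x)$ equals $|n(x)\cdot\hat{x}|/|x|^{d-1}$), equals
\[2\int_{\{f=0\}\cap B(0,R)}\frac{|n(x)\cdot\hat{x}|}{|x|^{d-1}}\log\frac{R}{|x|}\,d\H^{d-1}(x).\]

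The main obstacle is to convert this weighted surface integral into the unweighted quantity in the statement, namely $\int_{\{f=0\}\cap B(0,R)}\log(R/|x|)\,d\H^{d-1}(x)=\int_0^R\H^{d-1}(\{f=0\}\cap B(0,r))\,dr/r$. I expect this to be handled by averaging the estimate over translates: for each $w$ in a large ball, apply Jensen on lines through $w$, then integrate over $w$ and interchange. Each point $x\in\{f=0\}$ deep in the ball then acquires an averaged weight $\int_{B(x,R)}\frac{|n(x)\cdot\widehat{(x-w)}|}{|x-w|^{d-1}}\log(R/|x-w|)\,dw$, which by polar coordinates reduces to a constant independent of $n(x)$ and $x$, yielding the unweighted bound. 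The loss factor $\tfrac{3d}{d+2}$ in the constant $A_d=\tfrac{3d^2}{2(d+2)}\tfrac{\omega_d}{\omega_{d-1}}$ relative to the heuristic optimum $\tfrac{d\omega_d}{2\omega_{d-1}}$ arises because the translate-averaging requires controlling the term $-\log|f(w)|$ at non-origin centers, where no pointwise lower bound on $|f|$ is available; this is absorbed using the plurisubharmonicity of $\log|f|$ along complex lines, which contributes an averaged log-integral that can be bounded in terms of the mean-width via the same Jensen step used above.
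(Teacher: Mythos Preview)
Your approach diverges from the paper's at the very first step and this creates the gap you do not close.  The paper does \emph{not} restrict to lines through the origin; it applies Jensen's formula on \emph{every} affine line $\ell_{y,\theta}$ (parametrized by $\theta\in\S^{d-1}$ and $y\in\theta^\perp$), and then recovers the unweighted $\H^{d-1}$-measure of $\{f=0\}\cap B(0,R)$ directly via the Crofton formula for countably rectifiable sets.  This produces two terms: a main term which, after integrating $h_K(\theta)$ over $\theta$ and $(1-|y|^2)^{3/2}$ over $y\in\theta^\perp$, gives exactly $\tfrac{A_d}{d}\W(K)$ (the factor $3d/(d+2)$ is the value of the Beta integral $\int_0^1(1-r^2)^{3/2}r^{d-2}\,dr$ normalized appropriately, a purely geometric constant); and a secondary term $\tfrac{1}{R^{d}}\int_{B(0,R)}\log(1/|f(y)|)\,|y|^{-1}\,dm_d(y)$.

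Your proposal instead uses only lines through the origin, obtains a surface integral weighted by $|n(x)\cdot\hat x|/|x|^{d-1}$, and then hopes to remove the weight by averaging over translates.  Carrying that averaging out would simply reproduce the Crofton formula in disguise and land you on the same two terms.  The substantive gap is in how you propose to handle the secondary term $\int\log(1/|f(w)|)\,dw$ coming from centers $w$ with no lower bound on $|f(w)|$.  You assert it ``can be bounded in terms of the mean-width via the same Jensen step''; this is false.  Jensen's formula (or plurisubharmonicity of $\log|f|$) gives only \emph{lower} bounds on averages of $\log|f|$ over circles or complex balls, not upper bounds on $\int_{B(0,R)\cap\R^d}\log(1/|f|)$.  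The paper treats this term by a completely separate argument: a Remez-type inequality for one-dimensional Bernstein-class functions shows $\int_0^R\log(1/|f(r\theta)|)\,dr=O(R^2)$ uniformly in $\theta$, and the Blaschke condition $\int_0^\infty\log(1/|f(r\theta)|)(1+r^2)^{-1}\,dr<\infty$ plus dominated convergence show the normalized term is in fact $o(1)$ as $R\to\infty$, contributing nothing to the $\limsup$.

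In particular, your explanation of the origin of the loss factor $3d/(d+2)$ is incorrect: it does not come from absorbing the log term (that term vanishes in the limit), but from the chord-length geometry in the Crofton integration.
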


We were led to prove Proposition \ref{DFprop} from the work of Donnelly and Fefferman \cite[Proposition 6.7]{DF} regarding nodal sets of eigenfunctions.  It was only after proving Proposition \ref{DFprop} that we became aware of the work of Ronkin \cite{Ro} regarding discrete uniqueness sets, which follows a similar path\footnote{More precisely, as in \cite{Ro,DF}, we use Jensen's formula to get a bound on the number of zeroes along any one-dimensional slice, and then integrate over the slices using integral geometry to bound the surface area of the nodal set.}.  Ronkin considers the case when $K$ is a rectangle, and uses different integral geometry than we do here (in particular when generalized to a convex body the estimate would likely not directly involve the mean width).

Proposition \ref{DFprop} will be proved in Section \ref{DFsec}.  In this section we will show how one derives Theorem \ref{mobiledensitythm} from it.  The main goal will be to prove, via a compactness argument, the following ``fuzzy'' version of Proposition \ref{DFprop}:

\begin{prop}\label{compactprop}  Fix $\delta>0$, $R_0>0$.  There exists $\eps>0$ such that for every $\varphi$-regular set $\Gamma$ and $f\in \B(K)$ satisfying $\|f\|_{\infty}\leq 1$ and $|f(x)|>1/2$, there exists $R \geq R_0$ such that
$$\frac{1}{\omega_d R^d}\int_0^R \H^{d-1}(\Gamma\cap B(x,r)\cap \{|f|\leq \eps\})\frac{dr}{r}\leq \varphi(0)(\frac{A_d}{d}\W(K)+\delta).
$$
\end{prop}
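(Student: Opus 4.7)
The plan is to argue by contradiction and compactness, reducing the statement to Proposition~\ref{DFprop}. Suppose the conclusion fails: there exist $\delta,R_0>0$ and sequences $\eps_n\to 0^+$, $\varphi$-regular sets $\Gamma_n$, functions $f_n\in \B(K)$ with $\|f_n\|_\infty\leq 1$, and points $x_n$ with $|f_n(x_n)|>1/2$, for which the bound fails at every $R\geq R_0$. Translating, we may assume $x_n=0$. Since $\{f_n\}$ is uniformly bounded and spectrally localized in $K$, Bernstein's inequality gives uniform bounds on all derivatives of $f_n$ on compact sets, so Arzel\`a-Ascoli (with a diagonal extraction) produces a subsequential limit $f_n\to f$ uniformly on compacta, with $f\in \B(K)$, $\|f\|_\infty\leq 1$, $|f(0)|\geq 1/2$, and in particular $f\not\equiv 0$. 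By $\varphi$-regularity and a covering argument, the measures $\mu_n=\H^{d-1}|_{\Gamma_n}$ are uniformly locally finite, so after further extraction we may assume $\mu_n\to \mu$ and $\nu_n:=\mu_n|_{\{|f_n|\leq \eps_n\}}\to \nu$ vaguely, with $\nu\leq \mu$.

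The central step is to show $\nu\leq \varphi(0)\H^{d-1}|_{\{f=0\}}$. First, $\supp\nu\subset \{f=0\}$: if $\phi\geq 0$ is continuous with compact support in the open set $\{|f|>0\}$, then uniform convergence $f_n\to f$ on $\supp\phi$ forces $|f_n|>\eps_n$ there for large $n$, whence $\int \phi\,d\nu_n=0$ and therefore $\int \phi\,d\nu=0$. Second, for every $x\in \R^d$ and $r\in(0,1)$, lower-semicontinuity of open-set mass under vague convergence combined with $\varphi$-regularity gives
\[\nu(B(x,r))\leq \mu(B(x,r))\leq \liminf_n \mu_n(B(x,r))\leq \varphi(r)\omega_{d-1}r^{d-1},\]
and continuity of $\varphi$ at $0$ yields the upper density bound $\limsup_{r\to 0^+}\nu(B(x,r))/(\omega_{d-1}r^{d-1})\leq \varphi(0)$. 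A standard density-based comparison theorem from geometric measure theory (via Vitali/Besicovitch covering) then gives $\nu(E)\leq \varphi(0)\H^{d-1}(E)$ for every Borel set $E$, and combined with $\supp\nu\subset \{f=0\}$ this yields $\nu\leq \varphi(0)\H^{d-1}|_{\{f=0\}}$.

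To close the argument, I would transfer the failed inequality to the limit. The function $r\mapsto \nu_n(B(0,r))/r$ is dominated on $(0,R)$ by a single $n$-independent integrable function (namely $\varphi(r)\omega_{d-1}r^{d-2}$ for $r<1$, which is integrable because $d\geq 2$, together with a covering bound of order $r^{d-1}$ for $r\geq 1$). Using upper-semicontinuity of compact-set mass, $\limsup_n \nu_n(\overline{B(0,r)})\leq \nu(\overline{B(0,r)})$, and the fact that $\nu(\partial B(0,r))=0$ for all but countably many $r$, reverse Fatou gives
\[\varphi(0)\Bigl(\tfrac{A_d}{d}\W(K)+\delta\Bigr)\leq \limsup_n \frac{1}{\omega_d R^d}\int_0^R \nu_n(B(0,r))\frac{dr}{r}\leq \frac{1}{\omega_d R^d}\int_0^R \nu(B(0,r))\frac{dr}{r}\]
for every $R\geq R_0$. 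Inserting $\nu\leq \varphi(0)\H^{d-1}|_{\{f=0\}}$ and applying Proposition~\ref{DFprop} to $f$, the right side is at most $\varphi(0)\tfrac{A_d}{d}\W(K)$ in the limit $R\to \infty$, contradicting $\delta>0$.

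The principal obstacle is the density-comparison step: in general there is no upper-semicontinuity of $\H^{d-1}$ under vague or Hausdorff convergence of $(d-1)$-dimensional measures, so one cannot pass from $\H^{d-1}(\Gamma_n\cap B(0,r)\cap \{|f_n|\leq \eps_n\})$ to $\H^{d-1}(\{f=0\}\cap B(0,r))$ naively. The $\varphi$-regularity hypothesis is precisely what circumvents this, as it provides a uniform upper density bound that passes to the vague limit and then unlocks the comparison with $\H^{d-1}|_{\{f=0\}}$. A minor technical point is that the limit satisfies $|f(0)|\geq 1/2$ rather than the strict inequality in Proposition~\ref{DFprop}; since the conclusion there only requires $f\not\equiv 0$ (the precise value of $|f(0)|$ affects only a lower-order contribution killed in the $\limsup$ as $R\to \infty$), this is easily accommodated, either by inspecting the proof of Proposition~\ref{DFprop} or by a small strengthening at the contradiction setup.
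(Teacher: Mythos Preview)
Your proof is correct and follows essentially the same route as the paper: argue by contradiction, translate to the origin, pass to limits of the functions (via Bernstein-space compactness) and of the restricted measures (via $\varphi$-regularity and vague compactness), show the limit measure is supported in $\{f=0\}$ and dominated by $\varphi(0)\H^{d-1}$, then transfer the failed inequality via reverse Fatou to contradict Proposition~\ref{DFprop}. The only cosmetic differences are that the paper works directly with the single sequence $\nu_n$ (your auxiliary $\mu_n=\H^{d-1}|_{\Gamma_n}$ is not needed) and obtains $\nu(E)\leq \varphi(0)\H^{d-1}(E)$ in two lines straight from the definition of Hausdorff measure (cover $E$ by small balls and use $\nu(B(x,r))\leq \varphi(r)\omega_{d-1}r^{d-1}$) rather than invoking a density comparison theorem.
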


\subsection{Compactness preliminaries}    Here we collect the necessary material to execute the compactness argument.  The first lemma is well-known -- see, for instance \cite{OU}.

\begin{lem}\label{BernComp}  Suppose that $f_n\in \B(K)$ satisfy $\|f_n\|_{\infty}\leq 1$.  Then there is a subsequence $f_{n_k}$ that converges uniformly on compact sets to a function $f\in \B(K)$ with $\|f\|_{\infty}\leq 1$.
\end{lem}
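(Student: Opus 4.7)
The plan is to prove this as a standard normal families argument for Bernstein space, relying on two ingredients: an equicontinuity estimate coming from Bernstein's inequality (so that Arzel\`a--Ascoli applies on compact sets), and a distributional continuity argument to check that the limit inherits the spectral constraint.

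First I would establish the equicontinuity. Since $K$ is compact and origin-symmetric, fix $R>0$ so that $K\subset B(0,R)$. A standard Bernstein-type inequality for the Bernstein space $\B(K)$ asserts that any $g\in\B(K)$ with $\|g\|_\infty\leq 1$ satisfies $\|\nabla g\|_\infty\leq C_K$, where $C_K$ depends only on $K$ (or just on $R$). One can see this by writing $g=g\ast\psi$ for a Schwartz function $\psi$ with $\wh{\psi}\equiv 1$ on $K$, and differentiating under the integral. Applied to each $f_n$, this produces a uniform Lipschitz bound $\|\nabla f_n\|_\infty\leq C_K$. Combined with $\|f_n\|_\infty\leq 1$, the family $\{f_n\}$ is equicontinuous and uniformly bounded on every compact subset of $\R^d$.

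Next I would extract the convergent subsequence. Exhausting $\R^d$ by an increasing sequence of closed balls $\overline{B(0,j)}$ and applying Arzel\`a--Ascoli on each, a standard diagonal argument produces a subsequence (which I relabel $f_{n_k}$) converging uniformly on every compact set to a continuous function $f:\R^d\to\C$ with $\|f\|_\infty\leq 1$.

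The main (mild) obstacle is then to check that $f\in\B(K)$, i.e.\ that $\wh{f}$ as a tempered distribution is supported in $K$. For this I would argue by distributional convergence: for every $\psi\in\mathcal{S}(\R^d)$, since $|f_{n_k}|,|f|\leq 1$ and $f_{n_k}\to f$ pointwise, dominated convergence against the integrable function $|\psi|$ yields
\[\int_{\R^d}f_{n_k}\psi\,dm_d\longrightarrow \int_{\R^d}f\,\psi\,dm_d.\]
Hence $f_{n_k}\to f$ in $\mathcal{S}'(\R^d)$, and since the Fourier transform is continuous on $\mathcal{S}'$, also $\wh{f_{n_k}}\to\wh{f}$ in $\mathcal{S}'$. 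For any test function $\varphi\in C_c^\infty(\R^d\setminus K)$ we have $\langle \wh{f_{n_k}},\varphi\rangle = 0$ for all $k$, so $\langle \wh{f},\varphi\rangle=0$. This shows $\supp\wh{f}\subset K$, so $f\in \B(K)$ with $\|f\|_\infty\leq 1$, completing the proof.
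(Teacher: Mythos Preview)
Your argument is correct and is precisely the standard normal-families proof for the Bernstein space: Bernstein's inequality gives a uniform Lipschitz bound, Arzel\`a--Ascoli plus a diagonal argument yields a locally uniform limit, and distributional convergence of the Fourier transforms shows the spectral constraint passes to the limit. Each step is clean; in particular, writing $g=g\ast\psi$ with $\wh\psi\equiv 1$ on $K$ is the right way to get the derivative bound, and the dominated-convergence/$\mathcal{S}'$ argument for $\supp\wh f\subset K$ is sound.

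As for comparison with the paper: there is nothing to compare, since the paper does not prove this lemma. It simply records it as well known and points to \cite{OU}. Your write-up supplies exactly the argument one would expect to find in such a reference.
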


The next lemma concerns weak compactness of measures\footnote{Recall that all our measures are non-negative locally finite Borel measures.}.  We say that a sequence of measures $\mu_n$ converges weakly\footnote{Of course, this is an abuse of notation, but it is now standard.} to $\mu$ if 
$$\lim_{n\to \infty}\int_{\R^d}\varphi d\mu_n = \int_{\R^d}\varphi d\mu
$$
for every $\varphi\in C_0(\R^d)$ (the collection of continuous functions $\varphi:\R^d\to \R$ with compact support).

A proof of the following compactness lemma can be found in \cite{Mat}.

\begin{lem}\label{WeakComp}  Suppose that $\mu_n$ is a sequence of measures satisfying 
$$\sup_n\mu_n(B(0,R))<\infty\text{ for every }R>1.
$$
Then there is a subsequence $\mu_{n_k}$ that converges weakly to a measure $\mu$.
\end{lem}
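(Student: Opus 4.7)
The plan is to reduce Lemma \ref{WeakComp} to the Banach--Alaoglu theorem via a diagonal extraction, then invoke the Riesz representation theorem to convert the resulting positive linear functional back into a measure.

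Concretely, I would exhaust $\R^d$ by the closed balls $K_n = \overline{B(0,n)}$ and, for each $n$, use the separability of $C(K_n)$ to produce a countable family $\{\varphi_{n,k}\}_k \subset C_0(\R^d)$ whose restrictions are dense in $C(K_n)$, arranging (via a cutoff) that each $\varphi_{n,k}$ has support contained in $B(0,n+1)$. The key observation is that for every such $\varphi_{n,k}$, the scalar sequence $\int \varphi_{n,k}\, d\mu_j$ is bounded in $j$, thanks to the compact support of $\varphi_{n,k}$ and the hypothesis $\sup_j \mu_j(B(0, n+1)) < \infty$. A standard diagonal argument then extracts a subsequence $\mu_{j_i}$ such that $\int \varphi_{n,k}\, d\mu_{j_i}$ converges for every pair $(n,k)$.

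To extend the limit to a functional $L$ on all of $C_0(\R^d)$, I would take an arbitrary $\varphi$ supported in some $K_n$ and approximate it uniformly by elements $\varphi_{n,k}$ (whose supports are in $B(0,n+1)$); the bound $\sup_j \mu_j(B(0,n+1)) \leq C_{n+1}$ then forces $(\int\varphi\,d\mu_{j_i})_i$ to be Cauchy, and its limit defines $L(\varphi)$. The functional $L$ is linear and positive (inherited from the $\mu_{j_i}$), so by the Riesz representation theorem there is a locally finite Borel measure $\mu$ with $L(\varphi) = \int \varphi\, d\mu$ for every $\varphi \in C_0(\R^d)$. This gives $\mu_{j_i} \wlim \mu$.

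The only mild obstacle is the bookkeeping needed to ensure that uniform approximations to a compactly supported $\varphi$ can themselves be chosen with support in a fixed enlargement $B(0,n+1)$; otherwise the uniform boundedness estimate in $j$ would not transfer from the approximating functions to $\varphi$. The exhaustion-plus-cutoff construction of $\{\varphi_{n,k}\}$ handles this cleanly, and the remaining steps (diagonal extraction, passage to a continuous positive functional, and Riesz representation) are standard.
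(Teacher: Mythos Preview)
Your proposal is correct and follows a standard route to this classical result. Note, however, that the paper does not supply its own proof of Lemma~\ref{WeakComp}: it simply cites Mattila's book \cite{Mat} and moves on. So there is no in-paper argument to compare against; your diagonal-extraction-plus-Riesz proof is precisely the kind of argument one finds in such references, and the bookkeeping you flag (keeping the approximants supported in a fixed enlargement so that the uniform mass bound applies) is handled correctly.
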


The weak limit satisfies the following lower-semicontinuity properties \cite{Mat}:  Suppose that $\mu_n$ converges to $\mu$ weakly, then
\begin{itemize}
\item $\mu(U)\leq \liminf_{n\to \infty}\mu_n(U)$ for any open set $U\subset \R^n$, and 
\item $\mu(K)\geq \limsup_{n\to \infty}\mu_n(K)$ for any compact set $K\subset \R^n$.
\end{itemize}

\begin{lem}\label{supportlimsup}  Suppose that $\mu_n$ converges weakly to $\mu$, and $x\in \supp(\mu)$.  Then there exists a sequence $\{x_{n_k}\}_k$ with $x_{n_k}\in \supp(\mu_{n_k})$ such that $$\lim_{k\to \infty}x_{n_k}=x.$$
\end{lem}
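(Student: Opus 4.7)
The plan is to exploit the lower semi-continuity property for open sets stated just above the lemma, together with the characterization that $x\in \supp(\mu)$ if and only if every open neighborhood of $x$ has strictly positive $\mu$-measure.

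First I would fix a sequence of shrinking radii $r_k\downarrow 0$ (say $r_k=1/k$). Because $x\in \supp(\mu)$, each ball $B(x,r_k)$ satisfies $\mu(B(x,r_k))>0$. Applying the first bullet above the lemma,
\[
\liminf_{n\to\infty}\mu_n(B(x,r_k))\geq \mu(B(x,r_k))>0,
\]
so for each $k$ there is an index which I will call $n_k$ with $\mu_{n_k}(B(x,r_k))>0$. A simple diagonal selection lets me arrange that $n_k$ is strictly increasing in $k$: given $n_1,\dots,n_{k-1}$ already chosen, pick $n_k>n_{k-1}$ large enough so that $\mu_{n_k}(B(x,r_k))>\tfrac{1}{2}\mu(B(x,r_k))>0$.

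The next ingredient I would record is the elementary fact that for any locally finite Borel measure $\nu$ and any open set $U$ with $\nu(U)>0$, one has $U\cap \supp(\nu)\neq \emptyset$. Indeed, if $U$ were disjoint from $\supp(\nu)$, then every point of $U$ would admit an open neighborhood of $\nu$-measure zero; by the second countability of $\R^d$ one extracts a countable such subcover of $U$, forcing $\nu(U)=0$ and contradicting the hypothesis. Applying this with $\nu=\mu_{n_k}$ and $U=B(x,r_k)$ yields a point $x_{n_k}\in \supp(\mu_{n_k})\cap B(x,r_k)$. Since $|x_{n_k}-x|<r_k\to 0$, this is the sequence required by the conclusion.

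There is no substantive obstacle here; the argument is three short steps and uses only the open-set half of the semi-continuity dichotomy. The only things to be careful about are the diagonal choice making $n_k$ strictly increasing and the passage from ``$\mu_{n_k}(B(x,r_k))>0$'' to ``$\supp(\mu_{n_k})$ meets $B(x,r_k)$'', which is where one uses that $\supp(\mu_{n_k})$ is defined as the complement of the largest open null set. Note that the analogous statement with closed balls and the second (compact-set) bullet would not suffice, since positive mass on a closed set need not produce a support point in that set.
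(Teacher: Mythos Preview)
Your proof is correct and follows essentially the same route as the paper: the paper tests weak convergence against a bump function supported in $B(x,\eps)$ to see that $\mu_n(B(x,\eps))>0$ for all large $n$, whereas you invoke the already-stated lower semi-continuity $\mu(U)\leq \liminf_n \mu_n(U)$ for open $U$ to reach the same conclusion, and then both arguments pick a support point in the ball. One small correction to your closing remark: positive mass on a closed (indeed any Borel) set \emph{does} force a support point in that set; the real obstruction to using the compact-set bullet is that the inequality $\mu(\overline{B})\geq \limsup_n \mu_n(\overline{B})$ points the wrong way to deduce $\mu_n(\overline{B})>0$ from $\mu(\overline{B})>0$.
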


\begin{proof} Fix $\eps>0$, and choose a function $\varphi\in C_0(\R^d)$ such that $\supp(\varphi)\subset B(0,\eps)$ and $\varphi\equiv 1$ on $B(x,\eps/2)$.  Insofar as $x\in \supp(\mu)$, $0<\int_{\R^d}\varphi d\mu = \lim_{n\to \infty}\int_{\R^d}\varphi d\mu_n\leq \liminf_{n\to \infty}\mu_n(B(x,\eps))$, and therefore for all sufficiently large $n$, there exists $x_n\in \supp(\mu_n)$ with $|x_n-x|<\eps$.
\end{proof}

Let us now specialize weak convergence to $\varphi$-regular sets.

\begin{lem}\label{phireglimit}  Suppose that $\mu_n$ is a sequence of measures that are $\varphi$-regular.  Then there is a subsequence $\mu_{n_k}$ that converges weakly to a $\varphi$-regular measure.
\end{lem}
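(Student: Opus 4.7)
The plan is to invoke the weak compactness result (Lemma \ref{WeakComp}) to extract a weakly convergent subsequence, and then transfer the $\varphi$-regularity to the limit using the lower-semicontinuity on open sets that accompanies weak convergence. First I would verify the hypothesis of Lemma \ref{WeakComp}, namely the uniform local mass bound $\sup_n \mu_n(B(0,R)) < \infty$ for every $R > 1$. Fix some $r_0 \in (0,1)$ (say $r_0 = 1/2$) and cover $B(0,R)$ by a finite collection of balls $B(y_j, r_0)$, $j = 1, \dots, N_R$, where $N_R$ depends only on $R$ and $d$. Since each $\mu_n$ is $\varphi$-regular,
\[\mu_n(B(y_j, r_0)) \leq \varphi(r_0)\,\omega_{d-1}\,r_0^{d-1}\]
independently of $n$ and $j$, and subadditivity then yields $\mu_n(B(0,R)) \leq N_R\,\varphi(r_0)\,\omega_{d-1}\,r_0^{d-1}$, uniformly in $n$. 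Lemma \ref{WeakComp} thus produces a subsequence $\mu_{n_k}$ converging weakly to some locally finite Borel measure $\mu$.

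Next I would show that the limit measure $\mu$ is itself $\varphi$-regular. For any $x \in \R^d$ and $r \in (0,1)$, the ball $B(x,r)$ is open, so the lower-semicontinuity of weak limits on open sets recorded immediately after Lemma \ref{WeakComp} gives
\[\mu(B(x,r)) \leq \liminf_{k \to \infty} \mu_{n_k}(B(x,r)) \leq \varphi(r)\,\omega_{d-1}\,r^{d-1},\]
which is exactly the $\varphi$-regularity of $\mu$.

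The argument is essentially mechanical and I do not expect a serious obstacle; the only point requiring a moment of care is that the two semicontinuity properties of weak limits go in opposite directions for open and compact sets, and only the open-set statement has the correct sign to preserve the upper bound $\varphi(r)\,\omega_{d-1}\,r^{d-1}$. Hence it is important that in the definition of $\varphi$-regularity the ball $B(x,r)$ be interpreted as open (which is the convention implicit in the paper); otherwise one would have to approximate a closed ball from outside by slightly larger open balls, and this would in turn require some one-sided continuity of $\varphi$ beyond its continuity at $0$, which is not assumed.
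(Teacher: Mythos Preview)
Your proof is correct and follows essentially the same route as the paper's: cover $B(0,R)$ by finitely many balls of radius $1/2$ to verify the hypothesis of Lemma~\ref{WeakComp}, extract a weakly convergent subsequence, and then use lower semicontinuity of the weak limit on the open ball $B(x,r)$ to pass the bound $\varphi(r)\omega_{d-1}r^{d-1}$ to $\mu$. Your closing remark about the open-ball convention is a fair observation, though the paper simply uses the open ball without comment.
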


\begin{proof}  For any $n\in \N$ and $R>0$, the ball $B(0,R)$ can be covered by $CR^d$ balls of radius $1/2$.  Using $\varphi$-regularity we therefore conclude that
$$\sup_{n}\mu_n(B(0,R))\leq CR^d\cdot\varphi(1/2),
$$
and we may apply Lemma \ref{WeakComp} to find a subsequence $\mu_{n_k}$ that converges weakly to a measure $\mu$.  Now fix $x\in \R^d$ and $r\in (0,1)$.  
The lower semi-continuity of the weak limit ensures that
\begin{equation}\begin{split}\nonumber\mu(B(x,r))&\leq \liminf_{k\to \infty} \mu_{n_k}(B(x,r))\leq \varphi(r)\omega_{d-1}r^{d-1}\end{split}\end{equation}
and the lemma follows.\end{proof}

\subsection{The proof of Proposition \ref{compactprop}}  We are now in a position to prove Proposition \ref{compactprop}

\begin{proof}[Proof of Proposition \ref{compactprop}]  Suppose the statement fails to hold.  Then for every $n\in \N$ and a sequence $\eps_n \to 0$ as $n\to \infty$, there exists a $\varphi$-regular set $\wt\Gamma_n$, a function $\wt f_n\in \B(K)$, and $x_n\in \R^d$ satisfying $|\wt f(x_n)|>1/2$, such that for all $R\geq R_0$,
$$\frac{1}{\omega_d R^d}\int_0^R \H^{d-1}(\wt\Gamma_n\cap B(x_n,r)\cap \{|\wt f_n|\leq \eps_n\})\frac{dr}{r}> \varphi(0)(\frac{A_d}{d}\W(K)+\delta).
$$
Put $\Gamma_n = \wt\Gamma_n-x_n$ and $f_n = \wt{f_n}(\,\cdot+x_n)$, so that $\Gamma_n$ is $\varphi$-regular, and $f_n\in \B(K)$ satisfies $\|f_n\|_{\infty}=1$ and $|f_n(0)|>1/2$, but also
$$\frac{1}{\omega_d R^d}\int_0^R \H^{d-1}(\Gamma_n\cap B(0,r)\cap \{| f_n|\leq \eps_n\})\frac{dr}{r}> \varphi(0)(\frac{A_d}{d}\W(K)+\delta)
$$
whenever $R\geq R_0$.

Employing Lemma \ref{BernComp}, by passing to a subsequence we may assume that $f_n$ converge uniformly on compact sets to a function $f\in \B(K)$ (and hence $\|f\|_{\infty}\leq 1$ and $|f(0)|\geq 1/2$).  Additionally, since the measures $\mu_n = \H^{d-1}_{|\Gamma_n\cap \{|f_n|\leq \eps_n\}}$ are $\varphi$-regular, Lemma \ref{phireglimit} ensures that by passing to a subsequence if necessary we may assume that $\mu_n$ converge weakly to a $\varphi$-regular measure $\mu$.  Put $\Gamma = \supp(\mu)$.

From the definition of the Hausdorff measure we infer that for any Borel set $E\subset \R^d$, $$\mu(E)\leq \varphi(0)\H^{d-1}(E).$$ Indeed, if $\delta\in (0,1)$ and $B(x_j, r_j)$ is a cover of $E$ by balls with radius $r_j\leq \delta$, then 
$$\mu(E)\leq \sum_{j}\mu(B(x_j, r_j))\leq \Bigl\{\sup_{r\in (0,\delta)}\varphi(r)\Bigl\}\sum_{j}\omega_{d-1}r_j^{d-1}.
$$
Taking the infimum over such covers of $E$, and then letting $\delta\to 0$ yields the required estimate.

Now, the upper-semicontinuity of the weak limit ensures that, for any $r>0$,

\begin{equation}\begin{split}\nonumber \varphi(0)\H^{d-1}(\Gamma\cap \overline{B(0,r)})&\geq \mu(\overline{B(0,r)})\\&\geq \limsup_{n\to \infty}\H^{d-1}(\Gamma_n\cap \{|f_n|\leq \eps_n\}\cap \overline{B(0,r)}).\end{split}\end{equation}

Insofar as the sets $\Gamma_n$ are $\varphi$-regular, for every $R>0$, the function $r\mapsto \sup_n \H^{d-1}(\Gamma_n\cap \overline{B(0,r)})$ is integrable over $r\in [0,R]$ with respect to the measure $\frac{dr}{r}$, and so the ($\limsup$ variant of the) Fatou Lemma ensures that for $R\geq R_0$
\begin{equation}\begin{split}\nonumber
\int_0^R\H^{d-1}&(\Gamma\cap B(0,r))\frac{dr}{r}=\int_0^R\H^{d-1}(\Gamma\cap \overline{B(0,r)})\frac{dr}{r}\\
&\geq \frac{1}{\varphi(0)}\limsup_{n\to \infty}\int_0^R \H^{d-1}(\Gamma_n\cap\{|f_n|\leq \eps_n\}\cap B(0,r))\frac{dr}{r}\\
&>(\frac{A_d}{d}\W(K)+\delta).
\end{split}\end{equation}

Now fix $x\in \Gamma = \supp(\mu)$. Lemma \ref{supportlimsup} ensures that, by passing to another subsequence if necessary, there is a sequence $x_n\in \supp(\mu_n)$ such that $x_n\to x$.  However, since $f_n \to f$ uniformly on compact sets,  $$|f(x)| = \lim_{n\to \infty}|f_n(x_n)| \leq \lim_{n\to \infty} \eps_n=0,$$ and therefore $\Gamma\subset \{f=0\}$.

We have therefore proved that there exists $f\in \B(K)$ with $\|f\|_{\infty}\leq 1$, $|f(0)|>1/2$ but, for every $R\geq R_0$,
$$\frac{1}{\omega_d R^d}\int_0^R \H^{d-1}(\{f=0\}\cap B(0,r))\frac{dr}{r}>\frac{A_d}{d}\W(K)+\delta.
$$
Given Proposition \ref{DFprop} this is absurd.
\end{proof}


\subsection{The proof of Theorem \ref{mobiledensitythm}} 


\begin{proof}[Proof of Theorem \ref{mobiledensitythm}] Suppose $D(\Gamma)>\varphi(0)A_d\W(K)$.  First fix $\delta>0$ small enough to ensure that \begin{equation}\label{densbigdelta}\D^-(\Gamma)>\varphi(0)[A_d\W(K)+3d\delta].\end{equation}
Consequently, we may fix $R_0$ and a constant $c_0=c_0(\varphi,\delta,\W(K))>0$ such that for all $R\geq R_0$,
\begin{equation}\label{lowdensbdgamma}\frac{1}{\omega_dR^d}\int_{c_0R}^R\H^{d-1}(\Gamma\cap B(0,r))\frac{dr}{r}>\varphi(0)\Bigl[\frac{A_d}{d}\W(K)+2\delta\Bigl].
\end{equation}

Fix $\eps>0$ as in Proposition \ref{compactprop} with these choices of $\delta$ and $R_0$.    Given $f\in \B(K)$ satisfying $\|f\|_{\infty}=1$, choose $x\in \R^n$ such that $|f(x)|>1/2$.  Then by Proposition \ref{compactprop}, there exists $R\geq R_0$ such that
$$\frac{1}{\omega_d R^d}\int_0^R \H^{d-1}(\Gamma\cap B(x,r)\cap \{|f|\leq \eps\})\frac{dr}{r}\leq \varphi(0)\Bigl[\frac{A_d}{d}\W(K)+\delta\Bigl].
$$
Comparing this estimate with (\ref{lowdensbdgamma}) we infer that
$$\frac{1}{\omega_d R^d}\int_{c_0R}^R \H^{d-1}(\Gamma\cap B(x,r)\cap \{|f|> \eps\})\frac{dr}{r}\geq \varphi(0) \delta.
$$
An immediate consequence of this inequality is that $$\sup_{\Gamma}|f|>\eps = \eps\cdot \|f\|_{\infty},$$ and hence Theorem \ref{mobiledensitythm} has been proved in the case $p=+\infty$.  

Now suppose $1\leq p<\infty$, and observe that by the pigeonhole principle, there exists $r\in (c_0R, R)$ such that
$$\H^{d-1}(\Gamma\cap B(x,r)\cap \{|f|> \eps\})\geq c'r^d,
$$
and therefore, as $R\geq c_0R_0$,
$$\|f\|_{\infty}^p\leq C'\frac{1}{r^d}\int_{B(x,r)\cap \Gamma}|f|^p d\H^{d-1}\leq C''\int_{\Gamma}|f|^pd\H^{d-1},
$$
where $C''$ is independent of $f$.
We conclude that there exists $C''$ such that for every $f\in \B(K)$,
\begin{equation}\label{linflp}\|f\|_{\infty}^p\leq C''\int_{\Gamma}|f|^pd\H^{d-1}.
\end{equation}

Continuity of the mean-width ensures that we can choose $\kap>0$ such that, with $K_{\kap} = K+B(0,\kap)$ the $\kap$-neighborhood of $K$, we have
$$D(\Gamma)> \varphi(0)A_d\W(K_{\kap}),
$$
and therefore there is a constant $C>0$ such that the inequality (\ref{linflp}) holds for every $f\in \B(K_{\kap})$.  

Now suppose $f\in \PW_p(K)$.  We follow a standard trick, see for instance \cite[Chapter 2]{OU}.  Choose $h\in \mathcal{S}(\R^d)$ satisfying $h(0)=1$ and $\wh{h}\subset C^{\infty}_0(B(0, \kap))$.  Standard distribution theory ensures that for fixed $x$, the function $g(u) =  f(u)h(u-x)$ lies in $\B(K_{\kap})$.  Therefore, we may apply (\ref{linflp}) to $g$, and hence
\begin{equation}\begin{split}\nonumber\|f\|^p_{L^p(\R^d)}&\leq \int_{\R^d}\sup_{u\in\R^d} |f(u)h(u-x)|^pdm_d(x) \\&\stackrel{(\ref{linflp})}{\leq} C'\int_{\R^d}\int_{\Gamma}|f(u)|^p|h(u-x)|^pd\H^{d-1}(u)dm_d(x)\\&\leq C'\int_{\Gamma}|f|^p d\H^{d-1}.
\end{split}\end{equation}
We  conclude that  (\ref{mobile}) holds.\end{proof}

\section{The proof of the Ronkin estimate}\label{DFsec}

We now return to proving Proposition \ref{DFprop}.  

Recall that $\H^{d-1}(\S^{d-1})= d\cdot \omega_d,$ which (for instance) can be seen via the polar co-ordinates formula 
$\int_{\R^d}f\,dm_d = \int_0^{\infty}\int_{\S^{d-1}}f(r\theta)\,d\H^{d-1}(\theta) r^{d-1}dr.$

 

\begin{lem}\label{logarithmic}  Suppose $f\in \B(K)$ is not identically zero and satisfies $\|f\|_{\infty}\leq 1$.  Then
\begin{equation}\begin{split}
\frac{1}{\omega_dR^d}\int_0^R& \H^{d-1}(\{f=0\}\cap B(0,t))\frac{dt}{t} \leq \W(K) \frac{3d}{4+2d}\frac{\omega_d}{\omega_{d-1}}\\
&+ \frac{(d-1)}{2\omega_dR^{d+2}}\int_{B(0,R)}\log\Bigl(\frac{1}{|f(y)|}\Bigl)\frac{R^2-|y|^2}{|y|}dm_d(y).
\end{split}\end{equation}

\end{lem}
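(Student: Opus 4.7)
The plan is to combine Jensen's formula on one-dimensional complex slices with the Cauchy-Crofton integral-geometric identity. By Fubini and the elementary identity $\int_0^R 1_{\{|x|\leq t\}}\,\frac{dt}{t} = \log(R/|x|)$ for $|x|\leq R$, the left-hand side of the lemma equals $\int_{\{f=0\}\cap B(0,R)}\log(R/|x|)\,d\H^{d-1}(x)$. The Cauchy-Crofton formula, based on $\int_{\S^{d-1}}|\theta\cdot\nu|\,d\H^{d-1}(\theta) = 2\omega_{d-1}$ applied to the unit normal $\nu$ of $\{f=0\}$, then rewrites this as
\[
\int_0^R\H^{d-1}(\{f=0\}\cap B(0,t))\,\frac{dt}{t} = \frac{1}{2\omega_{d-1}}\int_{\S^{d-1}}\int_{\theta^{\perp}\cap B(0,R)}S(\theta,y)\,dm_{d-1}(y)\,d\H^{d-1}(\theta),
\]
where $S(\theta,y)$ is the sum of $\log(R/\sqrt{z_j^2+|y|^2})$ over the real zeros $z_j$ of the entire slice $g_{\theta,y}(z):=f(y+z\theta)$ satisfying $z_j^2+|y|^2\leq R^2$.

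Second, I bound $S(\theta,y)$ for a.e.\ $y$ with $f(y)\neq 0$. By Paley-Wiener, $g_{\theta,y}$ is entire of exponential type at most $2\pi h_K(\theta)$ and $|g_{\theta,y}|\leq 1$ on $\R$. Set $T:=\sqrt{R^2-|y|^2}$, let $n(s)$ count all complex zeros of $g_{\theta,y}$ in $\{|z|\leq s\}$, and define $N(s):=\int_0^s n(t)\,dt/t$. Discarding non-real zeros and changing variable $t\mapsto \sqrt{t^2-|y|^2}$ in the integral representation of $S$ gives $S(\theta,y)\leq \int_0^T n(s)\frac{s\,ds}{s^2+|y|^2}$, and integrating by parts using $n(s)=sN'(s)$ produces
\[
\int_0^T n(s)\frac{s\,ds}{s^2+|y|^2} = \frac{T^2}{R^2}N(T) - \int_0^T N(s)\frac{2s|y|^2\,ds}{(s^2+|y|^2)^2} \leq \frac{T^2}{R^2}N(T).
\]
Jensen's formula on $\{|z|\leq T\}$ combined with the exponential-type bound $|g_{\theta,y}(Te^{i\phi})|\leq e^{2\pi h_K(\theta)T|\sin\phi|}$ gives $N(T)\leq 4h_K(\theta)T+\log(1/|f(y)|)$, and hence $S(\theta,y)\leq \frac{4h_K(\theta)T^3}{R^2}+\frac{T^2}{R^2}\log\frac{1}{|f(y)|}$.

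Finally I integrate in $y$ and then $\theta$. The substitution $r=R\sin\alpha$ reduces $\int_{\theta^{\perp}\cap B(0,R)}(R^2-|y|^2)^{3/2}\,dm_{d-1}(y)$ to a beta-function integral that evaluates to $\frac{3\omega_d R^{d+2}}{2(d+2)}$, and by definition of the mean width $\int_{\S^{d-1}}h_K(\theta)\,d\H^{d-1}(\theta)=\frac{d\omega_d}{2}\W(K)$; combined, these produce precisely the advertised constant $\frac{3d}{4+2d}\frac{\omega_d}{\omega_{d-1}}$ for the $\W(K)$ term. For the logarithmic term I use the polar identity
\[
\int_{\S^{d-1}}\int_{\theta^{\perp}}F(y)\,dm_{d-1}(y)\,d\H^{d-1}(\theta) = (d-1)\omega_{d-1}\int_{\R^d}\frac{F(y)}{|y|}\,dm_d(y),
\]
verified by rotational symmetry and testing against radial indicator functions, which converts the slice integral of $(R^2-|y|^2)\log(1/|f(y)|)/R^2$ into the ball integral appearing in the statement. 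The key step is the integration by parts above: without it one would only obtain the crude bound $S(\theta,y)\leq N(T)$, giving the larger $\W(K)$-constant $\frac{d}{2}\frac{\omega_d}{\omega_{d-1}}$ and replacing the weight $(R^2-|y|^2)/|y|$ by $1/|y|$; the additional $T^2/R^2$ factor produced by the boundary evaluation sharpens both.
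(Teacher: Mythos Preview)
Your proof is correct and follows essentially the same route as the paper: Jensen's formula on one-dimensional complex slices, the Crofton formula (the paper's Fact~4.3), and the polar identity (the paper's Fact~4.4), combined with the same Beta-function evaluation. The only substantive difference is that where you obtain the crucial factor $T^2/R^2$ via integration by parts in $\int_0^T n(s)\frac{s\,ds}{s^2+|y|^2}$, the paper instead writes this integral as $\int_0^T n(u)\frac{u^2}{u^2+|y|^2}\frac{du}{u}$ and uses directly that $u\mapsto\frac{u^2}{u^2+|y|^2}$ is non-decreasing to bound it by $\frac{T^2}{R^2}N(T)$; both arguments yield the identical bound.
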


\begin{proof}
It suffices to prove the estimate for $R=1$.  For a general $R>0$, we may consider $f(R\,\cdot\,)$ instead of $f$ (which means replacing $K$ by $RK$), and an elementary change of variable yields the required identity.  So let us henceforth assume that $R=1$.

Fix $\theta\in \S^{d-1}$ and $y\in \theta^{\perp}$.  Set $\ell_{y, \theta}$ to be the line through $y$ with direction $\theta$.  Notice that $y$ is the closest point to $0$ in the line $\ell_{y,\theta}$, so is the mid-point of the line segment $\ell_{y,\theta}\cap \overline{B (0,1)}$.  The function $f_{y,\theta}:\R\to \C$ given  by $f_{y,\theta}(t) = f(\theta\cdot t+y)$ has its one-dimensional Fourier support in the interval $[-h_K(\theta),h_K(\theta)]$.    Therefore $f_{y,\theta}\in \B([-h_K(\theta), h_K(\theta)])$, $\|f_{y,\theta}\|_{\infty}\leq 1$, and hence (for instance, see \cite[Chapter 2]{OU})
\begin{equation}\label{Bgrowth}
|f_{y,\theta}(t+is)|\leq \|f_{y,\theta}\|_{\infty}e^{2\pi h_K(\theta) |s|} \leq e^{2\pi h_K(\theta) |s|}\text{ for }t,s\in \R.\end{equation}
Now, since  $\int_0^{2\pi}|\sin\theta|d\theta=4$,
employing Jensen's formula yields
$$\int_0^r\card(\{t\in [-s,s]: f_{y,\theta}(t)=0\})\frac{ds}{s}\leq 4 h_K(\theta) \cdot r +\log(1/|f_{y, \theta}(0)|). 
$$
Consequently,
\begin{equation}\begin{split}\label{linezeroes}
\int_0^1&\card(\{f=0\}\cap \ell_{y,\theta}\cap \overline{B(0,t)})\frac{dt}{t} \\&=\int_{|y|}^1\card(\{f=0\}\cap \ell_{y,\theta}\cap \overline{B(0,t)})\frac{dt}{t} \\
&= \int_{|y|}^1 \card(\{s\in [-\sqrt{t^2-|y|^2},\sqrt{t^2-|y|^2}]: f_{y,\theta}(s)=0\})\frac{dt}{t}\\
&=\int_0^{\sqrt{1-|y|^2}} \card(\{t\in [-u,u]: f_{y,\theta}(t)=0\})\frac{u^2}{u^2+|y|^2}\frac{du}{u}\\
&\leq 4h_K(\theta) (1-|y|^2)^{3/2}+(1-|y|^2)\log (1/|f(y)|),
\end{split}\end{equation}
where in the third inequality we have employed the substitution $u=\sqrt{t^2-|y|^2}$ and then used that $u\mapsto \frac{u^2}{u^2+|y|^2}$ is non-decreasing in the final inequality.

Since $f\not\equiv 0$ is a real analytic function in $\R^d$, the nodal set $\{f=0\}$ is countably $(d-1)$-rectifiable in the sense of Federer \cite[3.2.14]{F}.  This enables us to be able to utilize the following generalization of the Crofton formula (which can be proved (for instance) as a consequence of the co-area formula).

\begin{fact}\cite[3.2.26]{F}\label{federer}  If $E$ is countably $(d-1)$-rectifiable, 
\begin{equation}\label{Fedeq}\H^{d-1}(E) =\frac{1}{2\omega_{d-1}}\int_{\S^{d-1}}\int_{y\in \theta^{\perp} } \card\bigl(E\cap \ell_{y,\theta}\bigl)dm_{d-1}(y)d\H^{d-1}(\theta),
\end{equation}
where $\ell_{y,\theta}$ is the line through $y$ with angle $\theta$. 
\end{fact}

Let us verify that the constant appearing in the equality (\ref{Fedeq}) is correct:  Consider the case when $E=\S^{d-1}$. For every $\theta\in \S^{d-1}$ and $y\in \theta^{\perp}$, the line $\ell_{y,\theta}$ intersects $\S^{d-1}$ twice if $y\in B(0,1)$ and doesn't intersect $\S^{d-1}$ if $|y|>1$.  Whence 
$$\int_{\S^{d-1}}\int_{y\in \theta^{\perp} } \card\bigl(\S^{d-1}\cap \ell_{y,\theta}\bigl)dm_{d-1}(y)d\H^{d-1}(\theta)= 2\omega_{d-1}\H^{d-1}(\S^{d-1}),$$
as required.\\


We will require one more elementary measure theoretic fact:

\begin{fact}\label{integration}For a non-negative Borel measurable function $g$
\begin{equation}\label{intidentity}\int_{\S^{d-1}}\int_{y\in \theta^{\perp} } g(y) dm_{d-1}(y)d\H^{d-1}(\theta) = (d-1)\omega_{d-1}\int_{\R^d}\frac{g(y)}{|y|}dm_d(y).
\end{equation}
\end{fact}

We give a proof primarily to demonstrate that the factor $(d-1)\omega_{d-1}$ is correct.

\begin{proof}[Proof of Fact \ref{integration}]For a Borel set $E\subset \S^{d-1}$, put $C_E =\bigl\{x\in B(0,1): \frac{x}{|x|}\in E\bigl\}$.  The Borel measure
$$\nu(E) = \int_{\S^{d-1}}m_{d-1}(B(0,1)\cap C_E\cap \theta^{\perp})d\H^{d-1}(\theta)
$$
is a finite rotation invariant Borel measure on $\S^{d-1}$, and so by the uniqueness of such measures, equals $\frac{\nu(\S^{d-1})}{\H^{d-1}(\S^{d-1})}\H^{d-1}|_{\S^{d-1}}.$  Notice that
$$\nu(\S^{d-1}) = \H^{d-1}(\S^{d-1})m_{d-1}(B^{(d-1)}(0,1)),
$$
and thus $\nu = \omega_{d-1}\H^{d-1}|_{\S^{d-1}}.$  Observe now for a sector of the form $rC_E$ for $r>0$, we have that by the homogeneity of the $(d-1)$-dimensional Lebesgue measure,
\begin{equation}\begin{split}\nonumber\int_{\S^{d-1}}& \int_{y\in \theta^{\perp} } \chi_{rC_E} \,dm_{d-1}(y)d\H^{d-1}(\theta)= r^{d-1}\nu(E) \\
&= (d-1)\int_0^rs^{d-2} ds\cdot \nu(E) = (d-1)\omega_{d-1}\int_0^{r}s^{d-2}ds \H^{d-1}(E)\\
&= \omega_d(d-1)\int_{rC_E} \frac{1}{|y|}dm_d(y).
\end{split}\end{equation}
Therefore (\ref{intidentity}) holds when $g=\chi_{rE}$, and therefore also if  $g=\chi_{rC_E\backslash sC_E}$ for $0<s<r<\infty$.  Since any open set can be written as a countable disjoint union of polar rectangles of the form $rC_E\backslash sC_E$ for $E\subset \S^{d-1}$ Borel and $0<s<r<\infty$, Fact \ref{integration} follows.\end{proof}



Let us now apply these measure theoretic facts to our setting with $E=\{f=0\}$.  Averaging Fact \ref{federer} results in
\begin{equation}\begin{split}\nonumber&\int_0^1\H^{d-1}(E\cap B(0,t))\frac{dt}{t} \\&= \frac{1}{2\omega_{d-1}}\int_{\S^{d-1}}\int_{y\in \theta^{\perp} }\int_0^1 \card\bigl(E\cap B(0,t)\cap \ell_{y,\theta}\bigl)\frac{dt}{t}dm_{d-1}(y)d\H_{d-1}(\theta),
\end{split}\end{equation}
into which we plug the inequality (\ref{linezeroes}) and appeal to Fact \ref{integration} to yield
\begin{equation}\begin{split}\nonumber
\int_0^1&\H^{d-1}(\{f=0\}\cap B(0,t))\frac{dt}{t} \\
&\leq \frac{2}{\omega_{d-1}}\int_{\S^{d-1}}h_k(\theta)\int_{B(0,1)\cap \theta^{\perp}}(1-|y|^2)^{3/2}dm_{d-1}(y)dm_{d-1}(\theta) \\&\;\;\;+ \frac{(d-1)}{2}\int_{B(0,1)}\log(1/|f(y)|)\frac{1-|y|^2}{|y|}dm_d(y).
\end{split}\end{equation}


It remains to estimate the first term on the right hand side of this inequality.  For any $\theta\in \S^{d-1}$, and equals 
\begin{equation}\begin{split}\nonumber & \frac{1}{\omega_{d-1}}\int_{B(0,1)\cap \theta^{\perp}}(1-|y|^2)^{3/2}dm_{d-1}(y) = (d-1)\int_0^1(1-r^2)^{3/2}r^{d-1}\frac{dr}{r}\\
&=\frac{(d-1)}{2}\int_0^1(1-s)^{3/2}s^{(d-1)/2}\frac{ds}{s} = \frac{(d-1)}{2}\frac{\Gamma(5/2)\Gamma((d-1)/2)}{\Gamma(d/2+2)} \\&= \frac{\Gamma(5/2)\Gamma((d+1)/2)}{\Gamma(d/2+2)} =\frac{3}{2(2+d)}\frac{\omega_{d}}{\omega_{d-1}}.
\end{split}\end{equation}
Recalling that $\H^{d-1}(\S^{d-1})=d\omega_d$, the lemma is proved.
\end{proof}

Setting $\sigma = \diam(K)$, to complete the proof of Proposition \ref{DFprop}, it suffices to prove the following result.

\begin{lem}\label{loglimit}  Suppose $f\in \B(B(0,\sigma))$ is not identically equal to zero, $\|f\|_{\infty}\leq 1$ and $|f(0)|\geq \frac{1}{2}$.  Then
$$\limsup_{R\to \infty}\frac{1}{\omega_dR^{d}}\int_{B(0,R)}\log\Bigl(\frac{1}{|f(y)|}\Bigl)\frac{1}{|y|}dm_d(y)=0.$$
\end{lem}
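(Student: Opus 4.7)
The plan is to reduce the $d$-dimensional integral to a family of one-dimensional Bernstein-space problems along rays through the origin, and then invoke a uniform Cartwright-type estimate. First I will pass to spherical coordinates $y = r\theta$ with $r = |y|$ and $\theta \in \S^{d-1}$, which transforms the integral as
\begin{equation*}
\int_{B(0,R)}\log\frac{1}{|f(y)|}\,\frac{dm_d(y)}{|y|} = \int_{\S^{d-1}}\int_0^R \log\frac{1}{|g_\theta(r)|}\,r^{d-2}\,dr\,d\H^{d-1}(\theta),
\end{equation*}
where $g_\theta(r) := f(r\theta)$. For each fixed $\theta \in \S^{d-1}$, the function $g_\theta$ extends to an entire function of one complex variable of exponential type at most $2\pi\sigma$, satisfies $\|g_\theta\|_{L^{\infty}(\R)} \leq 1$, and has $|g_\theta(0)| = |f(0)| \geq 1/2$.

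The key technical input is a uniform one-dimensional Cartwright-type estimate: there exists $C = C(\sigma)$, independent of $\theta$, such that
\begin{equation*}
C_\theta := \int_{\R} \frac{\log^{-}|g_\theta(t)|}{1+t^2}\,dt \leq C.
\end{equation*}
To prove this I would apply the Poisson-Jensen formula (or Carleman's formula in the upper half-disk) to $g_\theta$ in the disk $|z| \leq R$ and let $R \to \infty$: the boundary integral is controlled using $\log|g_\theta(Re^{i\phi})| \leq 2\pi\sigma R |\sin\phi|$, and the lower bound $|g_\theta(0)| \geq 1/2$ prevents Jensen's identity from degenerating. I expect this to be the main obstacle, because the bound must depend only on $\sigma$ and on the lower bound $|f(0)| \geq 1/2$, and not on the zero distribution of $g_\theta$ or on the particular direction $\theta$.

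Once the uniform bound $C_\theta \leq C$ is in hand, the remainder is routine. For each fixed $\theta$, a tail-splitting argument (choose $M$ so that $\int_{|t|>M}\log^{-}|g_\theta(t)|/(1+t^2)\,dt$ is small, and bound $\int_M^R \log^{-}|g_\theta| \leq (1+R^2)\cdot\text{small}$) gives $\int_0^R \log^{-}|g_\theta(r)|\,dr = o(R^2)$ as $R \to \infty$, and therefore, pointwise in $\theta$,
\begin{equation*}
\frac{1}{R^d}\int_0^R \log^{-}|g_\theta(r)|\,r^{d-2}\,dr \leq \frac{1}{R^2}\int_0^R \log^{-}|g_\theta(r)|\,dr \longrightarrow 0.
\end{equation*}
For $R \geq 1$ this quantity is dominated by $2C_\theta \leq 2C$, a constant that is integrable over the finite-measure sphere $\S^{d-1}$. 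The dominated convergence theorem then delivers the lemma.
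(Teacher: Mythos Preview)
Your proposal is correct and follows the same architecture as the paper's proof: pass to polar coordinates, show for each direction $\theta$ that $\tfrac{1}{R^{2}}\int_0^R \log(1/|g_\theta|)\,dr\to 0$, produce a $\theta$-uniform dominating constant, and apply the dominated convergence theorem over $\S^{d-1}$. The pointwise-limit step is identical in both arguments (it is exactly the tail-splitting you describe, which the paper phrases as a one-line appeal to $\int_0^\infty \log(1/|g_\theta|)/(1+r^2)\,dr<\infty$ plus DCT).

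The only genuine difference is the source of the domination. The paper obtains the uniform bound $\tfrac{1}{R^2}\int_0^R \log(1/|g_\theta|)\,dr\leq C\sigma$ from a Remez-type sublevel-set estimate (proved in an appendix), after first reducing to real-valued $f$. You instead invoke a uniform Cartwright-class bound $\int_\R \log^-|g_\theta(t)|/(1+t^2)\,dt\leq C(\sigma)$, which controls the same quantity via $1/R^2\leq 2/(1+r^2)$ on $[0,R]$. That uniform bound is true, but your sketch via Poisson--Jensen in a disk is slightly off target: a cleaner way is to observe that $h(z)=g_\theta(z)e^{2\pi i\sigma z}$ is bounded analytic in the upper half-plane, so the Poisson inequality at $z_0=i\eps$ gives $\int_\R \log^-|g_\theta(t)|/(t^2+\eps^2)\,dt\leq \tfrac{\pi}{\eps}(2\pi\sigma\eps-\log|g_\theta(i\eps)|)$; choosing $\eps=\eps(\sigma)$ small enough that $|g_\theta(i\eps)|\geq 1/4$ (possible since $|g_\theta(0)|\geq 1/2$ and $|g_\theta'|$ is controlled near $0$) yields the bound. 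Your route is a bit more classical and has the minor bonus of not needing the reduction to real-valued $f$; the paper's Remez route is more self-contained.
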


\begin{proof}
It suffices to prove the claim for real valued $f\in \B(B(0, \sigma))$ satisfying $|f(0)|>1/4$, since if $f\in \B(B(0,\sigma))$ is complex valued, then, because $B(0,\sigma)$ is origin symmetric, its real and imaginary parts belong to $\B(B(0,\sigma))$, and replacing $f$ by either its real or imaginary part only increases the integral appearing in the lemma.

We make two claims:
\begin{cla}\label{bdd} The (non-negative) function 
$$R\mapsto \sup_{\theta\in \S^{d-1}}\frac{1}{\omega_d R^{2}}\int_0^R \log\Bigl(\frac{1}{|f(r\theta)|}\Bigl)dr
$$
is bounded on $[1,\infty)$.
\end{cla}
\begin{cla}\label{limit}  For each $\theta\in \S^{d-1}$, 
$$\lim_{R\to \infty}\frac{1}{R^{2}}\int_0^R \log\Bigl(\frac{1}{|f(r\theta)|}\Bigl)dr=0
$$
\end{cla}

After expressing the integral appearing on the left hand side of the conclusion of the lemma in polar co-ordinates:
$$\frac{1}{\omega_d R^{d}}\int_{\S^{d-1}}\int_0^R \log\Bigl(\frac{1}{|f(r\theta)|}\Bigl)r^{d-2}dr d\H^{d-1}(\omega),
$$
we see that the lemma follows immediately from these two claims via the dominated convergence theorem.\end{proof}

Let us return to prove these two claims.

\begin{proof}[Proof of Claim \ref{limit}]  This is a classical result.  The function $r\mapsto f(r\theta)$ is in $\B([-h_K(\theta), h_K(\theta)])$, and therefore
$$\int_0^{\infty}\frac{\log\bigl(\frac{1}{|f(r\theta)|}\bigl)}{1+r^2}dr<\infty.
$$
Claim \ref{limit} is now a consequence the Lebesgue dominated convergence theorem.
\end{proof}

For the proof of Claim \ref{bdd} we shall appeal to a Remez inequality.  The following inequality is a well-known simple special case of much more general results, for instance \cite{Br, NSV2}, but we give a concise proof in an appendix for the benefit of the reader.

\begin{lem}\label{simpleremez}  There is a constant $C>0$ such that the following inequality holds:  Fix $\sigma>0$.  Suppose $g\in \B([-\sigma,\sigma])$ is real valued and satisfies $|g(0)|>1/4$, then for any $F\subset [0,R)$ of positive Lebesgue measure
$$\sup_{[0, R)}|g|\leq C\Bigl(\frac{2eR}{m_1(F)}\Bigl)^{C+e\sigma R}\|g\|_{L^{\infty}(F)}.$$
\end{lem}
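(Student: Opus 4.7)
The plan is to reduce the claim to the classical Remez inequality for polynomials via an explicit polynomial approximation of the Bernstein function $g$. By replacing $g$ with $g(R\,\cdot)$, I may reduce to the case $R=1$; this replaces $\sigma$ by $\tau:=\sigma R$, and it suffices to prove
\[
\sup_{[0,1]}|g|\leq C\Bigl(\tfrac{2e}{m_{1}(F)}\Bigr)^{C+e\tau}\|g\|_{L^{\infty}(F)}
\]
for $g\in\B([-\tau,\tau])$ real-valued with $|g(0)|>1/4$ and $F\subset[0,1]$ of positive measure.

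The first ingredient is the classical polynomial Remez inequality: for every polynomial $P$ of degree at most $n$ and every measurable $F\subset[0,1]$ with $m_{1}(F)>0$,
\[
\sup_{[0,1]}|P|\leq\Bigl(\tfrac{4}{m_{1}(F)}\Bigr)^{n}\|P\|_{L^{\infty}(F)},
\]
which is a consequence of the extremal property of the Chebyshev polynomial $T_{n}$ on an appropriately rescaled interval. The second ingredient is a polynomial approximation of $g$: I would take $p$ to be the Taylor polynomial of $g$ of degree $N-1$ centered at $1/2$, with $N$ on the order of $e\pi\tau$. Combining the Bernstein derivative bound $\|g^{(k)}\|_{L^{\infty}(\R)}\leq(2\pi\tau)^{k}\|g\|_{L^{\infty}(\R)}$ with Stirling in the Taylor remainder yields
\[
\|g-p\|_{L^{\infty}[0,1]}\leq C\|g\|_{L^{\infty}(\R)}\Bigl(\tfrac{\pi e\tau}{N}\Bigr)^{N},
\]
which is exponentially small for $N$ suitably larger than $e\tau$, albeit in units of $\|g\|_{L^{\infty}(\R)}$.

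Applying the polynomial Remez to $p$ and using the triangle inequality yields the desired estimate up to an error term involving $\|g\|_{L^{\infty}(\R)}$. The main obstacle lies in absorbing this error: the polynomial-approximation bound naturally carries a factor of $\|g\|_{L^{\infty}(\R)}$, which may be considerably larger than the local quantity $\sup_{[0,1]}|g|$. This is where the hypothesis $|g(0)|>1/4$ is indispensable: together with $\|g'\|_{L^{\infty}(\R)}\leq 2\pi\tau\|g\|_{L^{\infty}(\R)}$, it forces $|g|\geq 1/8$ on an interval about the origin of length of order $1/(\tau\|g\|_{L^{\infty}(\R)})$. Exploiting this quantitative non-vanishing---either through a case split on the ratio $\|g\|_{L^{\infty}(\R)}/\sup_{[0,1]}|g|$, or by iterating the preliminary estimate on increasingly large intervals---so as to fold the $\|g\|_{L^{\infty}(\R)}$ factor into the Remez-type constants is the technical heart of the proof.
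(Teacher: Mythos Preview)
Your strategy has a genuine gap, and it is not where you locate it. After writing $\sup_{[0,1]}|g|\le\sup_{[0,1]}|p|+\epsilon$ and applying the polynomial Remez inequality to $p$, you obtain
\[
\sup_{[0,1]}|g|\ \le\ \Bigl(\tfrac{4}{m_1(F)}\Bigr)^{N}\sup_{F}|g|\ +\ \Bigl(\Bigl(\tfrac{4}{m_1(F)}\Bigr)^{N}+1\Bigr)\epsilon,
\]
so the Taylor remainder $\epsilon$ is \emph{multiplied by the Remez factor} $(4/m_1(F))^{N}$. Since $m_1(F)$ may be arbitrarily small while $N\asymp e\tau$ is fixed, this product is not controllable; to kill it you would need $N\gtrsim \tau/m_1(F)$, which destroys the exponent $C+e\tau$ you are aiming for. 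The obstacle is therefore not the ratio $\|g\|_{L^\infty(\R)}/\sup_{[0,1]}|g|$: even under the normalization $\|g\|_{L^\infty(\R)}\le 1$ (which the paper tacitly uses, and which holds in the application) the amplified error term persists. Your proposed cures---a case split on that ratio, or an iteration over growing intervals---do not touch this amplification, and the hypothesis $|g(0)|>1/4$ by itself cannot manufacture a lower bound on $\sup_F|g|$ that competes with $(4/m_1(F))^{N}\epsilon$.

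The paper sidesteps the issue entirely by replacing the Taylor step with Lagrange interpolation at $n+1$ nodes chosen \emph{inside} $F$, spaced at least $m_1(F)/n$ apart. The Lagrange formula with remainder reads
\[
g(s)=\sum_{j}\frac{g(t_j)Q(s)}{Q'(t_j)(s-t_j)}+\frac{g^{(n+1)}(\xi)}{(n+1)!}\,Q(s),
\]
so evaluating at $s=0$ gives $\tfrac14<|g(0)|\le \sup_F|g|\cdot\sum_j|Q'(t_j)|^{-1}+\|g^{(n+1)}\|_\infty/(n+1)!$. Here the Remez-type constant $\sum_j|Q'(t_j)|^{-1}\le(2e/m_1(F))^{n}$ multiplies only $\sup_F|g|$, while the derivative remainder stands alone and is made smaller than $1/8$ by taking $n\approx e\sigma$ (using Bernstein and Stirling). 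This decoupling is precisely what your Taylor-then-Remez scheme lacks; the fix is to interpolate at points of $F$ rather than to approximate globally and then restrict.
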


With this lemma in hand, we can complete the

\begin{proof}[Proof of Claim \ref{bdd}]  By Lemma \ref{simpleremez} with $g=f(\,\cdot\,\theta)$, we can find $C>0$ such that for all $\theta\in \S^{d-1}$,
$$m_1([0,R)\cap \{|f(\,\cdot\,\theta)|<\eps\})\leq CR\cdot (C\eps)^{1/C\sigma R}.$$
But then
\begin{equation}\begin{split}\nonumber\int_0^R\log\Bigl(\frac{1}{|f(r\theta)|}\Bigl)dr &=\int_0^{\infty}m_1([0,R)\cap \{|f(\,\cdot\,\theta)|<e^{-\lambda}\})d\lambda\\
&\leq CR\int_0^{\infty}e^{-c\lambda/\sigma R}d\lambda = C\sigma R^2.
\end{split}\end{equation}
Claim \ref{bdd} follows.
\end{proof}

\section{Example}\label{example}

Here we provide an example to show that the constant $A_d\geq \frac{\omega_d}{2\omega_{d-1}}$.  Consider the function
$$f(x) = \prod_{n=1}^d \frac{\sin(2\pi  x_n)}{x_n},
$$
which is a constant multiple of the Fourier transform of the cube $K=[-1, 1]^d$.  The function $f$ vanishes on a set $\Lambda =\{f=0\}$ with $\D^-(\Lambda) = 2$. By deleting small regions where any of the planes comprising $\Lambda$ intersect, we can find, for every $\eps>0$, a set $\Lambda_{\eps}\subset \Lambda$, which is $\varphi$-regular for a function satisfying $\lim_{r\to 0}\varphi(r)=1$, such that $\D^-(\Lambda_{\eps})>2-\eps$. Thus, from Theorem \ref{mobiledensitythm}, we must have that $A_d\W([-1,1]^d)\geq  2$.

On the other hand,  we claim that $\W(K) = 4\frac{\omega_{d-1}}{\omega_d}$ for $K=[-1,1]^d$.  To see this, observe that
$$\W(K) = \frac{2}{d\omega_d}\int_{\S^{d-1}}\bigl\{|\theta_1|+\cdots+|\theta_d| \bigl\} d\H^{d-1}(\theta) = \frac{2}{\omega_d}\int_{\S^{d-1}}|\theta_1|d\H^{d-1}(\theta).
$$
Integration by parts yields
\begin{equation}\begin{split}\nonumber
\int_{\R^d} |x_1|e^{-x^2/2} dm_d(x)
&= 2\int_{\R^{d-1}}e^{-|x|^2/2}dm_{d-1}(x)\\& = 2(d-1)\omega_{d-1}\int_0^{\infty}r^{d-2}e^{-r^2/2}dr.
\end{split}\end{equation}
But now observe that
\begin{equation}\begin{split}\nonumber
\int_{0}^{\infty} r^de^{-r^2/2} dr& = (d-1)\int_0^{\infty}r^{d-2}e^{-r^2/2}dr,
\end{split}\end{equation}
and therefore polar co-ordinates yields
$$\int_{\S^{d-1}}|\theta_1|d\H^{d-1}(\theta) = \frac{\int_{\R^d} |x_1|e^{-x^2/2} dm_d(x)}{\int_{0}^{\infty} r^de^{-r^2/2} dr} = 2\omega_{d-1},
$$
as required.

\appendix

\section{The proof of Lemma \ref{simpleremez}}

It clearly suffices to prove the lemma if $\sigma R$ is large. Rescaling the function we may set $R=1$ (and assume $\sigma$ is large).  We follow a standard route for proving a (non-sharp) version of the Remez inequality for polynomials, and in particular the exposition on p.11 of Nazarov-Sodin-Volberg \cite{NSV}.  

Fix $n\in \N$ and choose $t_1,\dots, t_{n+1}\in F$ with $t_1<t_2< t_3 <\cdots< t_{n+1}$ and $t_{i+1}-t_i\geq \frac{\H^1(F)}{n}.$  With $Q(s) = \prod_{j=1}^{n+1} (s-t_j)$, Lagrange interpolation yields
$$g(s) = \sum_{j=1}^{n+1}\frac{g(t_j)Q(s)}{Q'(t_j)(s-t_j)}+\frac{g^{(n+1)}(\xi)Q(s)}{(n+1)!}\text{ for some }\xi\in [-1,1].
$$
Therefore, using that $\|Q\|_{L^{\infty}}\leq1$, $\|g^{(k)}\|_{\infty}\leq \sigma^k$ for every $k\in \mathbb{Z}_+$, and $\Bigl\|\frac{Q(t)}{t-t_j}\Bigl\|_{\infty}\leq 1$,
$$\frac{1}{4}\leq \|g\|_{L^{\infty}(0,t)}\leq \sup_{s\in F}|g(s)|\sum_{j=1}^{n+1}\frac{1}{|Q'(t_j)|}+ \frac{\sigma^{n+1}}{(n+1)!}
$$
Simple estimates  (see p. 11 of \cite{NSV}) yield that
$$\sum_{j=1}^{n+1}\frac{1}{|Q'(t_j)|}\leq \Bigl(\frac{2e}{\H^1(F)}\Bigl)^n.
$$
Thus
$$\frac{1}{4}\leq \Bigl(\frac{2e}{\H^1(F)}\Bigl)^n\sup_{F}|g|+\frac{\sigma^{n+1}}{(n+1)!}.
$$
Now put $n=\lfloor\gamma\cdot \sigma\rfloor +1 $, for $\gamma>1$ to be chosen later, then by Stirling's formula (assume that $\sigma$ is large)
$$(\gamma\sigma+1)! \geq c(\gamma\sigma)^{\gamma\sigma+1}\sqrt{\gamma\sigma}e^{-\gamma\sigma},
$$
so
$$\frac{\sigma^{n+1}}{(n+1)!}\leq C\Bigl(\frac{e}{\gamma}\Bigl)^{\gamma\sigma}\frac{1}{\sqrt{\gamma\sigma}}.
$$
Put $\gamma=e$.  Then for $\sigma$ large enough
$$\frac{1}{4}\leq \Bigl(\frac{2e}{\H^1(F)}\Bigl)^{e\sigma}\sup_{F}|g|+\frac{C}{\sqrt{\sigma}}, \text{ and }1\leq 8 \Bigl(\frac{2e}{\H^1(F)}\Bigl)^{e\sigma}\sup_{F}|g|,
$$
the lemma is proved.

\vspace{.5cm}

\noindent\textbf{Acknowledgment.} The authors would like to thank J.-L. Romero for bringing the problem of mobile sampling to their attention.

\end{document}